\numberwithin{equation}{section}
\newtheorem{theorem}{Theorem}[section]
\newtheorem{lemma}[theorem]{Lemma}
\theoremstyle{definition}
\newtheorem{remark}[theorem]{Remark}
\newcommand{\PIVAL}{3.14159265358979323846264338} 
\newcounter{i} 
\newcommand{\Circulant}[2] { 
	\begin{tikzpicture}
	\setcounter{i}{0}
	\whiledo{\value{i}<#1}{ 
		\FPmul\tempA{2}{\thei} 
		\FPdiv\tempB{\PIVAL}{#1} 
		\FPmul\tempC{\tempA}{\tempB} 
		\FPcos\varX{\tempC} 
		\FPsin\varY{\tempC} 
		\stepcounter{i} 
		\FPround\varX{\varX}{3}
		\FPround\varY{\varY}{3}
		\node (\thei) at (\varX,\varY)[place]{ }; 
		\foreach \x in {#2} { 
			\pgfmathparse{mod(\x+\thei,#1)} 
			\let\tempB\pgfmathresult
			\pgfmathparse{mod(\thei-\x,#1)} 
			\let\tempA\pgfmathresult
			\ifthenelse{\lengthtest{\tempA pt < 1 pt}}{\FPadd\tempA{\tempA}{#1}}{}
			\ifthenelse{\lengthtest{\tempB pt < 1 pt}}{\FPadd\tempB{\tempB}{#1}}{}
			\ifthenelse{\lengthtest{\tempA pt > \thei pt}}{}{\ifthenelse{\thei = \tempA}{}{\draw [] (\thei) to (\tempA)}};
			\ifthenelse{\lengthtest{\tempB pt > \thei pt}}{}{\ifthenelse{\thei = \tempB}{}{\draw [] (\thei) to (\tempB)}};
		}
	}
	\end{tikzpicture}
}
\begin{document}

\tikzstyle{place}=[draw,circle,minimum size=0.5mm,inner sep=1pt,outer sep=-1.1pt,fill=black]

 
\title[Regularity of circulant graphs]{The regularity of some
  families of circulant graphs}
\thanks{Submitted Version: June 14, 2019}
      
\author{Miguel Eduardo Uribe-Paczka}
\address{Departamento de Matem\'aticas,
  Escuela Superior de F\'isica y Matem\'ticas, 
  Instituto Polit\'ecnico Nacional, 07300 Mexico City}
\email{muribep1700@alumno.ipn.mx}
 
\author{Adam Van Tuyl}
\address{Department of Mathematics and Statistics\\
  McMaster University, Hamilton, ON, L8S 4L8}
\email{vantuyl@math.mcmaster.ca}

\keywords{circulant graphs, edge ideals,
  Castelnuovo-Mumford regularity, projective dimension}
\subjclass[2000]{13D02,05C25,13F55}

\begin{abstract}
We compute the Castelnuovo-Mumford regularity of the edge ideals
of two families of circulant graphs, which includes all cubic circulant graphs.
A feature of our approach is to combine bounds on the regularity,
the projective dimension, and the reduced Euler characteristic
to derive an exact value for the regularity.
\end{abstract}

\maketitle


\section{Introduction} 

Let $G$ be any finite simple graph with vertex set $V(G) = \{x_1,\ldots,x_n\}$
and edge set $E(G)$, where simple means no loops or
multiple edges.
The \textbf{edge ideal} of $G$ is the ideal
$I(G)=
\left\langle  x_{i}x_{j} \mid \{x_{i},x_{j} \}\in E(G) \right\rangle$
in the polynomial ring $R = k[x_1,\ldots,x_n]$ ($k$ is any field).
Describing the
dictionary between the graph theoretic properties of $G$
and the algebraic properties of $I(G)$ or $R/I(G)$ is an 
active area of research, e.g., see
\cite{MV,V}.

Relating the homological
invariants of $I(G)$ and the graph theoretic invariants of $G$
has proven to be a fruitful approach to building this
dictionary.  Recall that the  
\textbf{minimal graded free resolution} of $I(G) \subseteq R$
is a long exact sequence of the form
\[
0 \rightarrow \bigoplus_{j} R(-j)^{\beta_{l,j}(I(G))} 
\rightarrow   \bigoplus_{j} R(-j)^{\beta_{l-1,j}(I(G))} \rightarrow \cdots 
\rightarrow   \bigoplus_{j} R(-j)^{\beta_{0,j}(I(G))} \rightarrow 
I(G) \rightarrow 0
\]
where $l \leq n$ and $R(-j)$ is the free $R$-module obtained by shifting 
the degrees of $R$ by $j$ (i.e., $R(-j)_a = R_{a-j}$). 
The number $\beta_{i,j}(I(G))$, the $i,j$-th 
\textbf{graded Betti number} of $I(G)$, equals the number of
minimal generators of degree $j$ in the $i$-th syzygy module of 
$I(G)$.  Two invariants that measure the ``size''
of the resolution are the \textbf{(Castelnuovo-Mumford) regularity}
and the \textbf{projective
dimension}, defined as
\begin{eqnarray*}
{\rm reg}(I(G)) & = & \max\{ j-i \mid \beta_{i,j}(I(G))\neq 0 \}, ~~\mbox{and}~~\\
{\rm pd}(I(G))  & = & \max\{ i \mid \beta_{i,j}(I(G))\neq 0 ~~\mbox{for some $j$}\}.
\end{eqnarray*}
One wishes to relate the numbers $\beta_{i,j}(I(G))$ to the
invariants of $G$; e.g., see the survey of H\`a \cite{Ha} which focuses
on describing ${\rm reg}(I(G))$ in terms of the invariants of $G$.

In this note we give explicit formulas for ${\rm reg}(I(G))$ for the
edge ideals of two infinite families of circulant graphs.
Our results complement previous work on the algebraic and
combinatorial topological properties of circulant
graphs (e.g, \cite{EVMVT,M,Ri,RiRo,RR,R,VMVTW}).
Fix an integer $n \geq 1$ and a subset $S \subseteq \{1,\ldots,\lfloor
\frac{n}{2} \rfloor \}$.  The \textbf{circulant graph}
$C_n(S)$ is the graph on the vertex set $\{x_{1},\ldots,x_{n}\}$
such that $\{x_i,x_j\} \in E(C_n(S))$ if and only if 
$|i-j|$ or $n-|i-j| \in S$. To simplify notation, we write $C_n(a_1,\ldots,a_t)$ instead of
$C_n(\{a_1,\ldots,a_t\})$. As an example, the graph $C_{10}(1,3)$ is drawn
in Figure \ref{circulantex}.
\begin{figure}[h!]
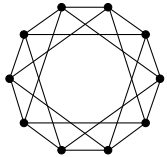

\Circulant{10}{1,3} 
\caption{The circulant $C_{10}(1,3)$}\label{circulantex}
\end{figure}

When $S = \{1,\ldots,\lfloor \frac{n}{2} \rfloor\}$, then
$C_n(S) \cong K_n$, the clique on $n$ vertices.
On the other hand, if $S = \{1\}$,
then $C_n(1) \cong C_n$, the cycle on $n$ vertices.   For both of these
families, the regularity of their edge ideals are known.
Specifically, the ideal
$I(K_n)$ has a linear
resolution by Fr\"oberg's Theorem \cite{F}, so ${\rm reg}(I(K_n))=2$.
The value of ${\rm reg}(I(C_n))$
can be deduced from work of Jacques
\cite[Theorem 7.6.28]{J}.  One can view
these circulant graphs as ``extremal'' cases in the sense that $|S|$ is either
as large, or small, as possible.

Our motivation is to understand the
next open cases.   In particular, generalizing the case of $K_n$, 
we compute ${\rm reg}(I(C_n(S))$ when $S =
\{1,\ldots,\widehat{j},\ldots,\lfloor \frac{n}{2} \rfloor\}$ for
any $1 \leq j \leq \lfloor \frac{n}{2} \rfloor$
(Theorem \ref{maintheorem1}).  For most $j$, the regularity
follows from Fr\"oberg's Theorem and a result
of Nevo \cite{E}. To generalize the
case of $C_n$ (a circulant graph where every vertex has degree two),
we compute the regularity of the edge ideal of 
any cubic (every vertex has degree three) circulant graph, that is, $G = C_{2n}(a,n)$ with
$1 \leq a \leq n$ (Theorem \ref{maintheorem2}).  Our proof
of Theorem \ref{maintheorem2} requires a new technique
to compute ${\rm reg}(I)$ for a square-free monomial ideal.  Specifically,
we show how to use partial information about
${\rm reg}(I)$, ${\rm pd}(I)$, and the reduced
Euler characteristic of the simplicial complex associated with $I$,
to determine ${\rm reg}(I)$ exactly (see
Theorem \ref{new-reg-result}).  We believe
this result to be of independent interest.

Our paper is structured as follows. In Section~\ref{Background} we
recall the relevant background regarding graph theory and commutative algebra,
along with our new result on the regularity of square-free
monomial ideals. In Section~\ref{reg-hat-j} we compute the
regularity  of $I(G)$ for the family of graphs
$G =C_{n}(1,\ldots,\hat{j},\ldots,\lfloor \frac{n}{2} \rfloor)$.
In Section~\ref{regularity-two-n-a-n}, we give an explicit formula
for the regularity of edge ideals of cubic circulant graphs.

\noindent
    {\bf Acknowledgements.} The authors thank Federico Galetto and Andrew Nicas
    for their comments and suggestions. Computation using
    {\it Macaulay2} \cite{Mt}  inspired some of our results.
    The first author thanks CONACYT for
    financial support, and the second author acknowledges the financial
    support of NSERC RGPIN-2019-05412.


\section{Background} \label{Background}

We review the relevant background from
graph theory and commutative algebra.  In addition,
we give a new result on the regularity of square-free monomial ideals.

\subsection{Graph theory preliminaries}
Let $G = (V(G),E(G))$ denote a finite simple graph.  We abuse
notation and write $xy$ for the edge $\{x,y\} \in E(G)$.  The
{\bf complement} of $G$, denoted $G^c$, is the graph $(V(G^c),E(G^c))$
where $V(G^c) = V(G)$ and $E(G^c) = \{xy ~|~
xy \not\in E(G)\}$.  The {\bf neighbours} of
$x \in V(G)$ is the set $N(x) = \{y ~|~ xy \in E(G)\}$.  
The {\bf closed neighbourhood}
of $x$ is $N[x] = N(x) \cup \{x\}$.  The {\bf degree} of $x$ is
$\deg(x) = |N(x)|$.

A graph $H = (V(H),E(H))$ is a {\bf subgraph} of $G$ if
$V(H) \subseteq V(G)$ and $E(H) \subseteq E(G)$.  Given a subset
$W \subseteq V(G)$, the {\bf induced subgraph} of $G$ on $W$ is
the graph $G_W = (W,E(G_W))$ where 
$E(G_W) = \{ xy \in E(G) ~|~ \{x,y\} \subseteq W\}$.  Notice that 
an induced subgraph is a subgraph of $G$, but not every subgraph
of $G$ is an induced subgraph.

An {\bf $n$-cycle}, denoted $C_n$, is the graph with
$V(C_n) = \{x_1,\ldots,x_n\}$
and edges $E(C_n) = \{x_1x_2,x_2x_3,\ldots,x_{n-1}x_n,x_nx_1\}$.
A graph $G$ has a {\bf cycle} of length $n$ if $G$ has a subgraph
of the form $C_n$.  A graph is a 
\textbf{chordal graph} if $G$ has no induced graph of the 
form $C_n$ with $n \geq 4$.
A graph $G$ is \textbf{co-chordal} if $G^c$ is chordal.
The \textbf{co-chordal number} of $G$, denoted ${\rm co\mbox{-}chord}(G)$,
is the smallest number of subgraphs of $G$ such that
$G = G_1 \cup \cdots \cup G_s$ and each $G_i^c$ is a chordal graph. 

A \textbf{claw} is the graph with $V(G) =\{x_1,x_2,x_3,x_4\}$
with edges $E(G) = \{x_1x_2,x_1x_3,x_1x_4\}$.
A graph is {\bf claw-free} if no induced subgraph of the graph is a claw.
A graph $G$ is {\bf gap-free}
if no induced subgraph of $G^c$ is a $C_4$.  Finally, the {\bf complete graph}
$K_n$ is the graph with $V(K_n) = \{x_1,\ldots,x_n\}$ and 
$E(K_n) = \{x_ix_j ~|~ 1 \leq i < j \leq n\}$.

\subsection{Algebraic preliminaries}
We recall some facts about the regularity of $I(G)$.  Note that for
any homogeneous ideal, ${\rm reg}(I) = {\rm reg}(R/I) + 1$.

We collect together a number of useful
results on the regularity of edge ideals.

\begin{theorem}\label{reg-results}
  Let $G$ be a finite simple graph.  Then
  \begin{enumerate}
   \item[$(i)$] if $G = H \cup K$, with $H$ and $K$ disjoint, then
    \[{\rm reg}(R/I(G)) = {\rm reg}(R/I(H)) + {\rm reg}(R/I(K)).\]
   \item[$(ii)$]  ${\rm reg}(I(G)) = 2$
     if and only if $G^c$ is a chordal graph.
  \item[$(iii$)] ${\rm reg}(I(G)) \leq {\rm co\mbox{-}chord}(G) +1$.
  \item[$(iv)$] if $G$ is gap-free and claw-free, then ${\rm reg}(I(G)) \leq 3$.
  \item[$(v)$] if $x \in V(G)$, then 
${\rm reg}(I(G))\in{ \{ {\rm reg}(I( G \setminus N_{G}[x]))+1, {\rm reg}(I( G \setminus x)) \} }.$
 \end{enumerate}
\end{theorem}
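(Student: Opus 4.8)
The plan is that Theorem \ref{reg-results} collects five facts that are already in the literature, so the work is to attach each item to its source and, where the argument is short, to recall it. I would dispatch them one at a time.

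Part $(i)$ I would get from the behaviour of free resolutions under tensor products over $k$. Since $V(H)$ and $V(K)$ are disjoint, $R/I(G)$ is, after adjoining any variables that do not occur, the tensor product over $k$ of the rings $R'/I(H)$ and $R''/I(K)$ attached to the two vertex sets, so a minimal graded free resolution of $R/I(G)$ is the tensor product of minimal graded free resolutions of the two factors; hence $\beta_{i,j}(R/I(G)) = \sum_{p+q=i,\ a+b=j}\beta_{p,a}(R/I(H))\,\beta_{q,b}(R/I(K))$, and taking the maximum of $j-i$ over nonvanishing Betti numbers gives the additivity of $\reg(R/I(G))$ (see, e.g., \cite{V}). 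Part $(ii)$ is exactly Fr\"oberg's theorem \cite{F}: $I(G)$ has a $2$-linear resolution, equivalently $\reg(I(G))=2$, if and only if $G^{c}$ is chordal. Part $(iv)$ is the theorem of Nevo \cite{E}, that a gap-free and claw-free graph satisfies $\reg(R/I(G))\le 2$.

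Part $(iii)$ I would deduce from $(ii)$: write $G = G_{1}\cup\cdots\cup G_{s}$ with $s = {\rm co\mbox{-}chord}(G)$ and each $G_{i}^{c}$ chordal, so $\reg(R/I(G_{i}))=1$ by $(ii)$, and then apply the sub-additivity estimate $\reg(R/I(G_{1}\cup\cdots\cup G_{s}))\le\sum_{i}\reg(R/I(G_{i}))$ (a Mayer--Vietoris / Kalai--Meshulam type argument), obtaining $\reg(R/I(G))\le s$, i.e. $\reg(I(G))\le s+1$; this is Woodroofe's bound. Part $(v)$ comes from the short exact sequence attached to a vertex $x$,
\[
0 \to \bigl(R/(I(G):x)\bigr)(-1) \to R/I(G) \to R/(I(G),x) \to 0
\]
(the first map being multiplication by $x$), together with the identifications $(I(G):x) = I(G\setminus N_{G}[x]) + (N_{G}(x))$ and $(I(G),x) = I(G\setminus x)+(x)$. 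Since adjoining or killing variables not occurring in an ideal does not change its regularity, one has $\reg(R/(I(G):x)) = \reg(R/I(G\setminus N_{G}[x]))$ and $\reg(R/(I(G),x)) = \reg(R/I(G\setminus x))$, and the long exact sequence in cohomology forces $\reg(R/I(G))$ to equal one of the two outer values; rewriting via $\reg(I)=\reg(R/I)+1$ gives the stated dichotomy. This is the lemma of Dao, Huneke, and Schweig (see also H\`a's survey \cite{Ha}).

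The only delicate point is $(v)$: the short exact sequence by itself yields merely $\reg(R/I(G))\le\max\{\reg(R/I(G\setminus N_{G}[x]))+1,\ \reg(R/I(G\setminus x))\}$ (together with $\reg(R/I(G\setminus N_{G}[x]))+1\le\reg(R/I(G))$), and to conclude that the value is \emph{attained} rather than only bounded one needs the sharper comparison of regularities in a short exact sequence due to Dao--Huneke--Schweig. Since the present note uses all five items purely as tools, I would cite the sources above rather than reproduce their proofs; there is no genuine obstacle here.
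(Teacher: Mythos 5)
Your proposal is correct and matches the paper's treatment: the paper proves this theorem purely by citing Woodroofe for $(i)$ and $(iii)$, Fr\"oberg for $(ii)$, Nevo for $(iv)$, and Dao--Huneke--Schweig for $(v)$, exactly the sources you identify. Your added sketches (tensor-product of resolutions for $(i)$, Kalai--Meshulam subadditivity plus Fr\"oberg for $(iii)$, and the colon/quotient short exact sequence with the sharper Dao--Huneke--Schweig comparison for $(v)$) are accurate and faithfully reflect how those cited results are actually proved.
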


\begin{proof}  For $(i)$, see Woodroofe \cite[Lemma 8]{W}.  Statement $(ii)$ is
 Fr\"oberg's Theorem \cite[Theorem 1]{F}.
  Woodroofe \cite[Theorem 1]{W} first proved $(iii)$.  Nevo first
  proved $(iv)$ in \cite[Theorem 5.1]{E}.  For $(v)$, see
  Dao, Huneke, and Schweig \cite[Lemma 3.1]{DHS}.
\end{proof}

We require a result of Kalai and Meshulam \cite{KM}
that has been specialized to edge ideals.

\begin{theorem}\cite[Theorems 1.4 and 1.5]{KM} \label{KM}
Let $G$ be a finite simple graph, and suppose $H$ and $K$ are subgraphs
such that $G = H \cup K$.  Then,
\begin{enumerate}
\item[$(i)$] ${\rm reg}(R/I(G)) \leq {\rm reg}(R/I(H)) + {\rm reg}(R/I(K))$,
  and 
\item[$(ii)$] ${\rm pd}(I(G)) \leq {\rm pd}(I(H)) + {\rm pd}(I(K)) + 1$.
\end{enumerate}
\end{theorem}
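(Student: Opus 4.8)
The plan is to obtain both inequalities as immediate specializations of Kalai and Meshulam's theorems on sums of monomial ideals, so that the only real work is to check that our hypothesis puts us in their setting. The first step is the elementary observation that, because $E(G) = E(H) \cup E(K)$, the edge ideals satisfy $I(G) = I(H) + I(K)$ in $R$: every squarefree quadratic generator $x_ix_j$ of $I(G)$ comes from an edge of $H$ or of $K$, and conversely. (If $H$ or $K$ omits some vertices one simply regards $I(H)$ and $I(K)$ as ideals of $R$; neither $\operatorname{reg}$ nor $\operatorname{pd}$ of a quotient is affected by adjoining variables absent from the ideal, so there is no loss.) With $I_1 = I(H)$ and $I_2 = I(K)$, part $(i)$ is then Kalai--Meshulam's regularity bound $\operatorname{reg}(R/(I_1+I_2)) \leq \operatorname{reg}(R/I_1) + \operatorname{reg}(R/I_2)$, and part $(ii)$ is their companion projective-dimension bound, whose statement already carries the additive $+1$ for $\operatorname{pd}$ of the ideal $I_1+I_2$.

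For orientation I would also indicate why [KM] yields these bounds. Writing $I(G) = I_\Delta$ for the Stanley--Reisner ideal of the independence complex $\Delta = \operatorname{Ind}(G)$, the identity $I(G) = I(H)+I(K)$ reads $\operatorname{Ind}(G) = \operatorname{Ind}(H) \cap \operatorname{Ind}(K)$, an intersection of complexes on a common vertex set. By Hochster's formula $\operatorname{reg}(R/I_\Delta)$ equals the $k$-Leray number $L_k(\Delta) = \max\{\, i+1 : \tilde H_i(\Delta_W;k)\neq 0 \text{ for some } W\,\}$, and the heart of [KM] is that Leray numbers are subadditive under intersection, $L_k(\Delta_1 \cap \Delta_2) \leq L_k(\Delta_1) + L_k(\Delta_2)$; this is $(i)$. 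For $(ii)$ one passes to Alexander duals, using $(\Delta_1 \cap \Delta_2)^\vee = \Delta_1^\vee \cup \Delta_2^\vee$ together with Terai's formula $\operatorname{pd}(R/I_\Delta) = \operatorname{reg}(I_{\Delta^\vee})$ and the Mayer--Vietoris sequence $0 \to k[\Delta_1\cup\Delta_2] \to k[\Delta_1]\oplus k[\Delta_2] \to k[\Delta_1\cap\Delta_2] \to 0$; the subadditivity from $(i)$ then bounds the regularity of a union, and tracking the degree shifts between $I$ and $R/I$ on the two sides of Terai duality produces exactly the stated $+1$.

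I do not expect a genuine obstacle here: at the level of this paper the argument is a dictionary translation plus a citation. The real content --- the topological statement that an intersection of a $d_1$-Leray complex and a $d_2$-Leray complex is $(d_1+d_2)$-Leray, proved by a spectral-sequence analysis of induced subcomplexes --- is precisely what [KM] supplies, and reproving it is neither necessary nor appropriate here. The only point demanding care in the write-up is the bookkeeping in $(ii)$: keeping straight which inequalities concern $I$ and which concern $R/I$, and the direction of the Alexander/Terai correspondence, so that the extra $+1$ lands on the projective-dimension side and not on the regularity side.
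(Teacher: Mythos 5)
Your proposal is correct and matches the paper's treatment: Theorem \ref{KM} is stated as a direct specialization of Kalai--Meshulam's Theorems 1.4 and 1.5 to edge ideals, and the paper offers no proof beyond the citation. Your translation ($G = H \cup K$ gives $I(G) = I(H) + I(K)$, equivalently ${\rm Ind}(G) = {\rm Ind}(H) \cap {\rm Ind}(K)$, after which the Leray-number subadditivity and its Alexander-dual companion apply) is exactly the intended reduction, and your bookkeeping of the $+1$ between $I$ and $R/I$ is accurate.
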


We now introduced a new result on the regularity of edge ideals.
In fact, because our result holds for all square-free monomial ideals, 
we present the more general case.   

Recall the following facts about simplicial complexes.
A \textbf{simplicial complex} $\Delta$ on a vertex set 
$V=\{x_{1},\ldots,x_{n}\}$ is a set of subsets of $V$ that 
satisfies: $(i)$ if $F\in{\Delta}$ and $G \subseteq F$, then $G\in{\Delta}$, 
and $(ii)$ for each $i\in{\{1,\ldots,n\}}$,
$\{x_{i}\}\in \Delta$. Note that condition $(i)$ implies that 
$\emptyset \in{\Delta}$. The elements of $\Delta$ are called 
its \textbf{faces}. For any $W \subseteq V$, the 
restriction of $\Delta$ to $W$ is the simplicial complex $\Delta_{W}=\{ F\in{\Delta} \mid F\subseteq W\}$.

The \textbf{dimension} of a face $F\in{\Delta}$ is given by 
${\rm dim}(F)=\left|F\right|-1$. The \textbf{dimension} of a
simplicial complex, denoted by ${\rm dim}(\Delta)$, is the 
maximum dimension of all its faces. Let $f_i$ be the number of faces of
$\Delta$ of dimension $i$, with the convention that $f_{-1}=1$. 
If ${\rm dim}(\Delta)=D$, then the $f$-\textbf{vector} of $\Delta$
is the $(D+2)$-tuple $f(\Delta)=(f_{-1},f_0,\ldots,f_D)$.

Given any simplicial complex $\Delta$ on $V$, 
associate with $\Delta$ a monomial ideal $I_{\Delta}$ in
the polynomial ring $R=k[x_{1},\ldots,x_n]$ (with $k$ a field) as follows:
\[
I_{\Delta}=\left\langle  x_{j_{1}}x_{j_{2}} \cdots x_{j_{r}} \mid \{ x_{j_{1}},\ldots,x_{j_{r}} \}\notin{\Delta}  \right\rangle. 
\]
The ideal $I_{\Delta}$ is the \textbf{Stanley-Reisner ideal} of $\Delta$.  This
construction can be reversed.  Given a square-free monomial ideal $I$ of
$R$,  the simplicial complex associated with $I$ is 
\[\Delta(I) = \left\{\{x_{i_1},\ldots,x_{i_r}\} ~|~ 
\mbox{the square-free monomial} ~x_{i_1}\cdots x_{i_r}\not\in I \right\}.\]

Given a square-free monomial ideal $I$, 
Hochster's Formula relates the Betti numbers of $I$ to the reduced
simplicial homology of $\Delta(I)$.  See \cite[Section 6.2]{V}
for more background 
on $\widetilde{H}_j(\Gamma;k)$, the $j$-th reduced simplicial homology
group of a simplicial complex $\Gamma$.

\begin{theorem}{\rm(Hochster's Formula)}
\label{Hochsters-Formula}
Let $I \subseteq R = k[x_1,\ldots,x_n]$ be a square-free monomial
ideal, and set $\Delta = \Delta(I)$.  Then, for all $i,j \geq 0$, 
\[
\beta_{i,j}(I)= \sum_{\left| W \right|=j,~ W \subseteq V} 
\dim_k \widetilde{H}_{j-i-2}(\Delta_W ; k).
\]
\end{theorem}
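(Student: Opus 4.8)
The plan is to derive Hochster's Formula from the standard Koszul-complex computation of graded Betti numbers, carried out in the fine $\mathbb{Z}^n$-grading of $R$. Since the minimal free resolution of $R/I_\Delta$ is obtained from that of $I_\Delta$ by a single homological shift, $\beta_{i,j}(I_\Delta) = \beta_{i+1,j}(R/I_\Delta)$, and it suffices to prove the equivalent assertion
\[
\beta_{i,j}(R/I_\Delta) \;=\; \sum_{|W| = j,\; W \subseteq V} \dim_k \widetilde{H}_{j-i-1}(\Delta_W; k)
\]
and then reindex. Because $\beta_{i,j}(R/I_\Delta) = \dim_k \operatorname{Tor}_i^R(R/I_\Delta, k)_j$, I would compute $\operatorname{Tor}$ by tensoring $R/I_\Delta$ with the Koszul complex $K_\bullet$ that resolves $k$ over $R$: here $K_p = \bigwedge^p R^n$ has $k$-basis $\{e_T : T \subseteq V,\ |T| = p\}$ with $e_T$ carrying the multidegree $\mathbf{e}_T$ (the $0/1$ indicator vector of $T$), and $\partial(e_T) = \sum_{t \in T} \pm\, x_t\, e_{T \setminus \{t\}}$. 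The complex $K_\bullet \otimes_R R/I_\Delta$ is $\mathbb{Z}^n$-graded, so I would analyze it one multidegree $\mathbf{a} \in \mathbb{N}^n$ at a time.

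The first step is to reduce to square-free multidegrees, i.e.\ to show $\operatorname{Tor}_i^R(R/I_\Delta, k)_{\mathbf{a}} = 0$ whenever $\mathbf{a} \notin \{0,1\}^n$. This is the one place requiring a genuine argument. If some coordinate $a_\ell \geq 2$, then since $I_\Delta$ is square-free no minimal generator is divisible by $x_\ell^2$, so membership in $I_\Delta$ of a monomial of multidegree $\mathbf{a} - \mathbf{e}_T$ is unaffected by whether $\ell \in T$; using this, one splits the strand $(K_\bullet \otimes R/I_\Delta)_{\mathbf{a}}$ according to whether the index set of the Koszul generator contains $\ell$ and builds an explicit contracting homotopy out of $\wedge\, e_\ell$ and division by $x_\ell$, forcing the homology to vanish. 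After this reduction only the multidegrees $\mathbf{a} = \mathbf{e}_W$, $W \subseteq V$, contribute, and grouping them by $|W| = j$ produces the outer sum.

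The second step is to identify the square-free strand with reduced simplicial (co)homology. Fix $W$ with $|W| = j$. In multidegree $\mathbf{e}_W$ the summand $(R/I_\Delta)\,e_T$ is one-dimensional precisely when $T \subseteq W$ and $W \setminus T \in \Delta$ (equivalently $W \setminus T \in \Delta_W$), spanned by the monomial $x^{\mathbf{e}_{W \setminus T}}\, e_T$, and is zero otherwise. Reindexing each such basis element by the face $\sigma := W \setminus T \in \Delta_W$ — so that homological degree $|T| = j - |\sigma|$ matches simplicial dimension $|\sigma| - 1 = j - 1 - |T|$ — a direct check of signs turns the Koszul differential into $\pm$ the simplicial coboundary $\widetilde{C}^{q}(\Delta_W; k) \to \widetilde{C}^{q+1}(\Delta_W; k)$. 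Hence $\operatorname{Tor}_i^R(R/I_\Delta, k)_{\mathbf{e}_W} \cong \widetilde{H}^{\,j-i-1}(\Delta_W; k) \cong \widetilde{H}_{j-i-1}(\Delta_W; k)$, the last isomorphism being universal coefficients over the field $k$. Summing over $|W| = j$ proves the displayed formula for $\beta_{i,j}(R/I_\Delta)$, and the shift $\beta_{i,j}(I_\Delta) = \beta_{i+1,j}(R/I_\Delta)$ replaces $j-i-1$ by $j-i-2$, which is the claimed statement.

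The main obstacle is the vanishing of $\operatorname{Tor}$ in non-square-free multidegrees together with the careful sign bookkeeping needed to match the Koszul differential with the simplicial coboundary; once the fine grading is in place the remaining steps are formal. Alternatively, one may simply cite this as the special case of Hochster's general formula for Stanley--Reisner rings recorded in \cite[Section 6.2]{V}.
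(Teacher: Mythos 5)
The paper does not prove this statement: Hochster's formula is quoted as a known classical result (with \cite[Section 6.2]{V} cited for background), so there is no in-paper argument to compare against. Your Koszul-complex derivation in the fine $\mathbb{Z}^n$-grading is the standard proof and is correct: the reduction to square-free multidegrees via a contracting homotopy, the identification of the strand in degree $\mathbf{e}_W$ with the reduced cochain complex of $\Delta_W$ under $\sigma = W\setminus T$, the use of universal coefficients over the field $k$, and the shift $\beta_{i,j}(I_\Delta)=\beta_{i+1,j}(R/I_\Delta)$ converting $j-i-1$ into $j-i-2$ are all handled with the right indexing.
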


Given a simplicial complex $\Delta$ of dimension $D$, 
the dimensions of the homology groups
$\widetilde{H}_{i}(\Delta ; k)$ are related to the
$f$-vector $f(\Delta)$ via the \textbf{reduced Euler characteristic}:
\begin{equation}
\label{def-Euler-characteristic}
\widetilde{\chi}(\Delta)= 
\sum_{i=-1}^D (-1)^{i} \dim_k \tilde{H}_{i}(\Delta ; k)=
\sum_{i=-1}^D (-1)^{i}f_i.
\end{equation}
Note that the 
reduced Euler characteristic is normally defined to be equal to one
of the two sums, and then one proves the two sums are equal
(e.g., see \cite[Section 6.2]{V}).

Our new result on the regularity of square-free
monomial ideals allows us to determine ${\rm reg}(I)$
exactly if we have enough partial information on the
regularity, projective dimension, and the reduced Euler characteristic.

\begin{theorem}\label{new-reg-result}
Let $I$ be a square-free monomial ideal of $R = k[x_1,\ldots,x_n]$
with associated simplicial complex $\Delta = \Delta(I)$.  
\begin{enumerate}
\item[$(i)$] Suppose that ${\rm reg}(I) \leq r$ and ${\rm pd}(I) \leq
n-r+1$.
\begin{enumerate}
\item[$(a)$] If $r$ is even and $\widetilde{\chi}(\Delta) > 0$,
then ${\rm reg}(I) =r$.
\item[$(b)$] If $r$ is odd and $\widetilde{\chi}(\Delta) < 0$,
then ${\rm reg}(I) =r$.
\end{enumerate}
\item[$(ii)$] Suppose that ${\rm reg}(I) \leq r$ and ${\rm pd}(I) \leq
n-r$.  If $\widetilde{\chi}(\Delta) \neq 0$, then ${\rm reg}(I) = r$.
\end{enumerate}

\end{theorem}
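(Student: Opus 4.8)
The plan is to extract information from Hochster's Formula (Theorem~\ref{Hochsters-Formula}) by relating the two given bounds — the one on the regularity and the one on the projective dimension — to precisely which Betti numbers $\beta_{i,j}(I)$ can be nonzero, and then to use the reduced Euler characteristic of the full complex $\Delta = \Delta_V$ as an alternating sum that cannot vanish (or has a definite sign) under the hypotheses, forcing the top Betti number to be nonzero.

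First I would set up the arithmetic. By Hochster's Formula, $\beta_{i,j}(I) \neq 0$ for some $j$ forces a contribution from $\widetilde H_{j-i-2}(\Delta_W;k)$ with $|W| = j$. Taking $W = V$ picks out $\widetilde H_{n-i-2}(\Delta;k)$, contributing to $\beta_{i,n}(I)$. The bound ${\rm reg}(I) \le r$ says $\beta_{i,j}(I) = 0$ whenever $j - i > r$; the bound ${\rm pd}(I) \le n-r+1$ (resp. $n-r$) says $\beta_{i,j}(I) = 0$ whenever $i > n-r+1$ (resp. $i > n-r$). I would then write the reduced Euler characteristic of $\Delta$ using the homological definition in \eqref{def-Euler-characteristic}, namely $\widetilde\chi(\Delta) = \sum_{\ell} (-1)^\ell \dim_k \widetilde H_\ell(\Delta;k)$, and re-index via $\ell = n - i - 2$ so that $\dim_k \widetilde H_\ell(\Delta;k)$ is exactly the $W = V$ summand of $\beta_{i,n}(I)$ in Hochster's Formula — being careful that Hochster's formula as stated ties the homological degree $j-i-2$ to $j$, so with $W = V$ we need $j = n$.

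The key step is then to observe that the two bounds confine the nonzero $\beta_{i,n}(I)$ to a \emph{single} value of $i$, or to a short run of values of $i$ all of the same parity, so that the alternating sum $\widetilde\chi(\Delta)$ reduces (up to sign) to either one such Betti number or a sum of such Betti numbers with a common sign, hence is nonzero exactly when that Betti number is. Concretely: with $j = n$ fixed, ${\rm reg}(I)\le r$ forces $n - i \le r$, i.e. $i \ge n-r$; combined with ${\rm pd}(I) \le n-r+1$ this leaves $i \in \{n-r, n-r+1\}$ in case $(i)$, while in case $(ii)$ the sharper bound ${\rm pd}(I)\le n-r$ leaves only $i = n-r$. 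In case $(ii)$, $\widetilde\chi(\Delta) \neq 0$ therefore forces $\beta_{n-r,n}(I) \neq 0$, and since $n - (n-r) = r$ we get ${\rm reg}(I) \ge r$, hence $= r$. In case $(i)$, the two surviving values $i = n-r$ and $i = n-r+1$ contribute to homological degrees $n - i - 2 = r - 2$ and $r - 3$, which have opposite parity; only the term $i = n-r$ yields regularity $r$ (the term $i = n-r+1$ gives $j - i = r - 1$). So $\widetilde\chi(\Delta)$, which collects $\pm\beta_{n-r,n}$ with sign $(-1)^{r-2} = (-1)^r$ and $\mp\beta_{n-r+1,n}$ with sign $(-1)^{r-3}$, cannot be forced nonzero by $\beta_{n-r+1,n}$ alone in a way that pins down the regularity — hence the parity/sign hypothesis: if $r$ is even we need $\widetilde\chi(\Delta) > 0$ so that the $(-1)^r = +1$ contribution of $\beta_{n-r,n}$ is the one that can produce a positive value once we check the other potential contributor has the wrong sign, and dually for $r$ odd with $\widetilde\chi(\Delta) < 0$.

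The main obstacle I anticipate is bookkeeping the parity and sign carefully enough in case~$(i)$ to be sure that the sign condition genuinely isolates $\beta_{n-r,n}(I)$: one must confirm that, although $\beta_{n-r+1,n}(I)$ (contributing in homological degree $r-3$) is also allowed to be nonzero, it contributes to $\widetilde\chi(\Delta)$ with sign $(-1)^{r-3} = (-1)^{r-1}$, which is $-\,(-1)^r$, so a value of $\widetilde\chi(\Delta)$ having the \emph{same} sign as $(-1)^r$ cannot be accounted for purely by that term and therefore demands $\beta_{n-r,n}(I) > 0$. I would double-check the Hochster re-indexing (the shift by $2$ and the role of $j = n$ vs. $|W| = j$ for all $W$) since an off-by-one there is the easiest way to misstate the hypotheses; and I would note the harmless point that one should take $\Delta = \Delta_V$, i.e. $V \in$ the index set $\{|W| = n\}$, which is automatic. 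Once these sign/parity checks are in place, both parts follow immediately from $\beta_{n-r,n}(I) \neq 0 \Rightarrow {\rm reg}(I) \geq (n) - (n-r) = r$ together with the assumed upper bound ${\rm reg}(I) \leq r$.
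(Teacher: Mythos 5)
Your proposal is correct and follows essentially the same route as the paper's own proof: reduce to the Betti numbers $\beta_{i,n}(I)=\dim_k\widetilde H_{n-i-2}(\Delta;k)$ via Hochster's formula (with $W=V$ the only subset of size $n$), use the regularity and projective dimension bounds to confine the possibly nonzero ones to $i\in\{n-r,n-r+1\}$ (resp.\ $i=n-r$), and then read off from the sign of $\widetilde\chi(\Delta)$ that $\beta_{n-r,n}(I)\neq 0$, forcing ${\rm reg}(I)=r$. The sign and re-indexing checks you flag are exactly the ones the paper carries out, and your handling of them is accurate.
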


\begin{proof}
By Hochster's Formula (Theorem \ref{Hochsters-Formula}), note that
$\beta_{a,n}(I) = \dim_k \widetilde{H}_{n-a-2}(\Delta;k)$ for all
$a \geq 0$ 
since the only subset $W \subseteq V$ with $|W| = n$ is $V$.

$(i)$  If ${\rm reg}(I) \leq r$ and ${\rm pd}(I) \leq n-r+1$
we have $\beta_{a,n}(I) = 0$ for all $a\leq n-r-1$ and $\beta_{a,n}(I) = 0$
for all $a \geq n-r+2$.  Consequently, among all the
graded Betti numbers of form $\beta_{a,n}(I)$ as $a$ varies,
all but $\beta_{n-r,n}(I) = \dim_k \widetilde{H}_{r-2}(\Delta;k) $ and
$\beta_{n-r+1,n}(I) = \dim_k \widetilde{H}_{r-3}(\Delta;k) $
 may be non-zero.  Thus by \eqref{def-Euler-characteristic}
\begin{eqnarray*}
\widetilde{\chi}(\Delta)& =& 
(-1)^{r-2}  \dim_k \widetilde{H}_{r-2}(\Delta;k) + 
(-1)^{r-3} \dim_k \widetilde{H}_{r-3}(\Delta;k).
\end{eqnarray*}
If we now suppose that $r$ is even and $\widetilde{\chi}(\Delta) > 0$,
the above expression implies 
\[\dim_k \widetilde{H}_{r-2}(\Delta;k) - \dim_k \widetilde{H}_{r-3}(\Delta;k) > 
0, \]
and thus $\beta_{n-r,r}(I) = \dim_k \widetilde{H}_{r-2}(\Delta;k) \neq 0$.
As a consequence, ${\rm reg}(I) = r$, thus proving $(a)$.  Similarly,
if $r$ is odd and $\widetilde{\chi}(\Delta) < 0$, this again forces
$\beta_{n-r,r}(I) = \dim_k \widetilde{H}_{r-2}(\Delta;k) \neq 0$, thus
proving $(b)$.

$(ii)$  Similar to part $(i)$, the hypotheses on the regularity and
projective dimension imply that $\widetilde{\chi}(\Delta) = 
\beta_{n-r,n}(I) = (-1)^{r-2}\dim_k \widetilde{H}_{r-2}(\Delta;k)$.  
So, if $\widetilde{\chi}(\Delta) \neq 0$, then
$\beta_{n-r,n}(I) \neq 0$, which implies ${\rm reg}(I) = r$.
\end{proof}

\begin{remark}
There is a similar result to Theorem \ref{new-reg-result} for the 
projective dimension of $I$.  In particular, under the assumptions of
$(i)$ and if $r$ is even and $\widetilde{\chi}(\Delta) < 0$, or
if $r$ is odd and $\widetilde{\chi}(\Delta) > 0$, then
the proof of Theorem \ref{new-reg-result} shows that
${\rm pd}(I) = n-r+1.$  Under the assumptions of $(ii)$, then
${\rm pd}(I) = n-r$.
\end{remark}

We will apply Theorem \ref{new-reg-result}
to compute the regularity of cubic circulant graphs (see
Theorem \ref{maintheorem2}).
We will also require the following terminology and results
which relates the reduced Euler characteristic to the independence
polynomial of a graph.

A subset $W \subseteq V(G)$ is an \textbf{independent set} if for all
$e\in{E(G)}$, $e \nsubseteq W$.  The \textbf{independence complex} of $G$ is the set of all independent sets:
\[
{\rm Ind}(G)=\{ W \mid W \mbox{ is an independent set of } V(G)\}. 
\]
Note that ${\rm Ind}(G) = \Delta_{I(G)}$, the simplicial complex 
associated with the edge ideal $I(G)$.

The \textbf{independence polynomial} of a graph $G$  is defined as
\[
I(G,x)= \sum_{r=0}^\alpha i_{r}x^{r},
\]
where $i_r$ is the number of independent sets of
cardinality $r$. 
Note that 
$(i_0,i_1,\ldots,i_\alpha)=(f_{-1},f_0,\ldots,f_{\alpha-1})$ is the
 $f$-vector of ${\rm Ind}(G)$. Since 
$\widetilde{\chi}({\rm Ind}(G))=\sum_{i=-1}^{\alpha-1} (-1)^{i}f_i$, we get: 
\begin{equation}
\label{eq-Euler-independence}
\widetilde{\chi}({\rm Ind}(G))=-I(G,-1). 
\end{equation}
Thus, the value of $\widetilde{\chi}({\rm Ind}(G))$ can be extracted
from the independence polynomial $I(G,x)$.


\section{The regularity of the edge ideals of 
$C_{n}(1,\ldots,\widehat{j},\ldots,\lfloor \frac{n}{2} \rfloor)$
}
\label{reg-hat-j}

In this section we compute the regularity of the edge ideal
of the circulant graph $G = C_n(S)$ with $S = \{1,\ldots,\widehat{j},\ldots,
\lfloor \frac{n}{2} \rfloor\}$ for any
$j \in \{1,\ldots,\lfloor \frac{n}{2} \rfloor\}$.

We begin with the observation that the complement of $G$ is also
a circulant graph, and in particular, $G^c = C_n(j)$.  Furthermore,
we have the following structure result.

\begin{lemma}\label{complement}
  Let $H = C_n(j)$ with
  $1 \leq j \leq \left\lfloor \frac{n}{2}
  \right\rfloor$,
  and set $d = {\rm gcd}(j,n)$. 
  Then $H$ is the union of $d$ disjoint cycles of length $\frac{n}{d}$. 
  Furthermore, $H$ is a chordal graph if and only if
  $n=2j$ or $n=3d$. 
\end{lemma}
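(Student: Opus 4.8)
The plan is to analyze the circulant graph $H = C_n(j)$ directly from its definition, treating the vertex set $\{x_1,\dots,x_n\}$ as $\mathbb{Z}/n\mathbb{Z}$ so that the edges are exactly the pairs $\{x_i,x_{i+j}\}$ (indices mod $n$). First I would establish the structural claim. The subgroup $\langle j \rangle \subseteq \mathbb{Z}/n\mathbb{Z}$ generated by $j$ has order $\frac{n}{d}$ where $d = \gcd(j,n)$, and its cosets partition $\mathbb{Z}/n\mathbb{Z}$ into $d$ blocks each of size $\frac{n}{d}$. On each coset $x_i, x_{i+j}, x_{i+2j}, \dots$, the edge relation $\{x_a, x_{a+j}\}$ traces out a cycle of length $\frac{n}{d}$ (a single cycle precisely because $j$ generates the coset as an affine orbit), and there are no edges between distinct cosets since any edge shifts the index by $\pm j$. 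This gives the decomposition of $H$ into $d$ disjoint $\frac{n}{d}$-cycles. One small subtlety to dispatch: when $\frac{n}{d} = 2$ (i.e.\ $n = 2j$), the ``cycle of length $2$'' is really a single edge (a double edge collapses in a simple graph); and when $\frac{n}{d}=1$ this degenerates, but $\frac{n}{d}\geq 2$ always holds here since $j \leq \lfloor n/2\rfloor$ forces $d \leq n/2$. I would note these boundary interpretations explicitly.

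Next I would deduce the chordality characterization from the structural description together with the standard fact that a disjoint union of graphs is chordal if and only if each component is chordal, and that a cycle $C_m$ is chordal if and only if $m \leq 3$ (for $m = 1, 2$ it is a point or an edge, hence chordal; for $m = 3$ it is a triangle; for $m \geq 4$ it is the forbidden induced cycle). So $H$ is chordal if and only if each of its $d$ components, a cycle of length $\frac{n}{d}$, satisfies $\frac{n}{d} \leq 3$, i.e.\ $\frac{n}{d} \in \{2,3\}$. The case $\frac{n}{d} = 2$ means $n = 2d$; since $d = \gcd(j,n) \mid j$ and $j \leq n/2 = d$, this forces $j = d$, hence $n = 2j$. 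The case $\frac{n}{d} = 3$ means exactly $n = 3d$. Conversely, if $n = 2j$ then $d = j$ and every component is a single edge, so $H$ is chordal; if $n = 3d$ then every component is a triangle, so $H$ is chordal. This yields the stated equivalence.

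The only real care needed — and the step I expect to require the most attention — is verifying that each coset of $\langle j\rangle$ supports a single cycle (rather than several smaller ones) under the edge relation, and correctly handling the small-length degenerate cases ($\frac{n}{d} = 2$) in a simple graph; everything else is bookkeeping with $\gcd$'s. I would phrase the coset argument by observing that the map $x_i \mapsto x_{i+j}$ restricted to a fixed coset is a cyclic permutation of order $\frac{n}{d}$, whose functional graph (undirected) is precisely a single $\frac{n}{d}$-cycle, so the edges inside that coset form $C_{n/d}$ exactly.
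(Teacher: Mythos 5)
Your proposal is correct and follows essentially the same route as the paper: decompose $H$ into the $d$ cosets of $\langle j\rangle$ in $\mathbb{Z}/n\mathbb{Z}$, observe each coset induces a cycle of length $\frac{n}{d}$, and then invoke that a cycle is chordal precisely when its length is at most $3$. Your write-up is in fact slightly more careful than the paper's on one point — you explicitly verify that $\frac{n}{d}=2$ forces $j=d$ and hence $n=2j$ (using $j\le\lfloor n/2\rfloor$), a step the paper leaves implicit in its ``otherwise $\frac{n}{d}\ge 4$'' conclusion.
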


\begin{proof}
  Label the vertices of $H$ as $\{0,1,\ldots,n-1\}$, and set
  $d = {\rm gcd}(j,n)$.  For each $0 \leq i < d$, the
  induced graph on the vertices $\{i,j+i,2j+i,\ldots,(d-1)j+i\}$
  is a cycle of length $\frac{n}{d}$, thus proving the first statement (if
  $\frac{n}{d} = 2$, then $H$ consists of disjoint edges).
  For the second statement, if $n=3d$, then $H$ is the disjoint union of
  three cycles, and thus chordal.  If $n=2j$, then $H$ consists of
  $j$ disjoint edges, and consequently, is chordal.  Otherwise,
  $\frac{n}{d} \geq 4$, and so $H$ is not chordal.
\end{proof}

\begin{lemma} \label{reg-general}
  Let $G = C_{n}(1,\ldots,\widehat{j},\ldots,\lfloor \frac{n}{2} \rfloor)$,
  and $d = {\rm gcd}(j,n)$.  If $\frac{n}{d} \geq 5$, then
  $G$ is claw-free.
\end{lemma}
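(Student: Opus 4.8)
The plan is to show that $G = C_n(1,\ldots,\widehat{j},\ldots,\lfloor \frac{n}{2}\rfloor)$ contains no induced claw by exploiting the fact that $G^c = C_n(j)$ is a disjoint union of cycles of length $\frac{n}{d} \geq 5$ (by Lemma \ref{complement}). Recall that an induced claw in $G$ on vertices $\{a, b_1, b_2, b_3\}$, with $a$ the center, means that $ab_i \in E(G)$ for each $i$ while $b_1 b_2, b_1 b_3, b_2 b_3 \notin E(G)$. Translating to the complement: $\{b_1, b_2, b_3\}$ is a triangle in $G^c = C_n(j)$, and $a$ is adjacent to none of $b_1, b_2, b_3$ in $G^c$. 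So the claim reduces entirely to a statement about $C_n(j)$: \emph{if every connected component of $C_n(j)$ is a cycle of length at least $5$, then $C_n(j)$ contains no triangle, and in particular no induced claw exists in $G$.}

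First I would observe that a cycle of length $m \geq 4$ contains no triangle (three vertices of a cycle $C_m$ span at most two edges when $m \geq 4$), hence when $\frac{n}{d} \geq 5$ the graph $C_n(j)$ — being a vertex-disjoint union of copies of $C_{n/d}$ — is triangle-free. Therefore there is no set $\{b_1, b_2, b_3\} \subseteq V(G)$ forming a triangle in $G^c$, which is exactly the obstruction needed for a center vertex's three neighbours to be pairwise non-adjacent in $G$. Consequently $G$ has no induced claw, i.e. $G$ is claw-free. I should be slightly careful about the degenerate reading of Lemma \ref{complement}: if $\frac{n}{d} = 2$ the "cycles" are edges, but the hypothesis $\frac{n}{d} \geq 5$ rules this out, so every component is genuinely a cycle and the triangle-free conclusion is clean.

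The main (and only real) obstacle is making sure the complement bookkeeping is airtight — that an induced claw in $G$ corresponds precisely to a triangle in $G^c$ together with an extra vertex isolated from it in $G^c$ — and that I have correctly invoked Lemma \ref{complement} with the right value of $d = \gcd(j,n)$ to identify $G^c$ with $C_n(j)$; the latter identification was already noted in the text preceding Lemma \ref{complement}. Once those are in place, the argument is short: triangle-freeness of a union of long cycles is immediate, and it kills the claw.
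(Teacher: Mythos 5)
Your proposal is correct and follows essentially the same route as the paper: pass to the complement, observe that an induced claw in $G$ would force a triangle (plus an isolated vertex) as an induced subgraph of $G^c = C_n(j)$, and rule this out because $G^c$ is a disjoint union of cycles of length $\frac{n}{d}\geq 5$, which is triangle-free. Your extra remarks (spelling out the triangle-freeness of long cycles and checking the degenerate $\frac{n}{d}=2$ case) only make the paper's argument more explicit.
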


\begin{proof}
Suppose that $G$ has an induced subgraph $H$ on $\{z_1,z_2,z_3,z_4\}
\subseteq V(G)$ that is a claw.  Then $H^{c}$ is an induced subgraph 
of $G^{c}$ of the form: 
\begin{center}
\begin{tikzpicture}
[scale=.45]
\draw[thick] (0,0) --(2,0)--(1,2)--(0,0);
\draw [fill] (0,0) circle [radius=0.1];
\draw [fill] (2,0) circle [radius=0.1];
\draw [fill] (1,2) circle [radius=0.1];
\draw [fill] (3.5,1) circle [radius=0.1];

\node at (0,-0.5) {$z_{4}$};
\node at (2,-0.5) {$z_{2}$};
\node at (1,2.5) {$z_{3}$};
\node at (4,1) {$z_{1}$};
\end{tikzpicture}
\end{center}
But by Lemma \ref{complement}, the induced cycles of $G^c$ 
have length $\frac{n}{d} \geq 5$.
Thus $G$ is claw-free.
\end{proof}

We now come to the main theorem of this section.

\begin{theorem}\label{maintheorem1}
  Let $G =C_{n}(1,\ldots,\widehat{j},\ldots,\lfloor \frac{n}{2} \rfloor)$.
  If $d={\rm gcd}(j,n)$, then
\[
  {\rm reg}(I(G)) =
  \begin{cases}
  2 & \mbox{$n=2j$ or $n=3d$} \\
  3 & \mbox{otherwise.}
\end{cases}
\]
\end{theorem}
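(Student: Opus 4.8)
The plan is to split into the two cases dictated by the statement. First suppose $n = 2j$ or $n = 3d$. By the opening observation of this section, $G^c = C_n(j)$, and by Lemma~\ref{complement}, under exactly these hypotheses $H = C_n(j)$ is a chordal graph. Hence by Theorem~\ref{reg-results}$(ii)$ (Fr\"oberg's Theorem), ${\rm reg}(I(G)) = 2$. This disposes of the first case immediately.

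Now suppose $n \neq 2j$ and $n \neq 3d$, so that $\frac{n}{d} \geq 4$ by Lemma~\ref{complement}. I want to show ${\rm reg}(I(G)) = 3$. For the upper bound, I will argue that $G$ is gap-free and claw-free, so that Theorem~\ref{reg-results}$(iv)$ gives ${\rm reg}(I(G)) \leq 3$. Gap-freeness means $G^c = C_n(j)$ has no induced $C_4$; since $G^c$ is a disjoint union of $d$ cycles of length $\frac{n}{d}$, an induced $C_4$ in $G^c$ would have to be one of these cycles, forcing $\frac{n}{d} = 4$, i.e.\ $n = 4d$. So gap-freeness holds whenever $\frac{n}{d} \neq 4$. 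When $\frac{n}{d} \geq 5$, claw-freeness is exactly Lemma~\ref{reg-general}, so Theorem~\ref{reg-results}$(iv)$ applies and ${\rm reg}(I(G)) \leq 3$. The remaining subcase $\frac{n}{d} = 4$ (so $n = 4d$) needs a separate argument for the upper bound: here $G^c$ is $d$ disjoint $4$-cycles, which is a bipartite graph with $\mathrm{co\text{-}chord}$ number governed by its structure; I would instead bound $\mathrm{co\text{-}chord}(G)$ directly and invoke Theorem~\ref{reg-results}$(iii)$, or write $G = G_1 \cup G_2$ with each $G_i^c$ chordal and use Theorem~\ref{KM}$(i)$ together with the fact that a chordal graph has regularity $2$ for its edge ideal. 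Concretely, since $G^c$ is a union of $4$-cycles $z_a z_b z_c z_d z_a$, splitting each $4$-cycle's edge set into two paths (each a chordal graph, in fact each path's complement is chordal) realizes $G$ as a union of two co-chordal graphs, giving ${\rm reg}(I(G)) \le 2 + 2 - 1 = 3$ via Theorem~\ref{KM} — I should double-check the bookkeeping here.

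For the lower bound ${\rm reg}(I(G)) \geq 3$ in the second case, I would use Theorem~\ref{reg-results}$(ii)$ in contrapositive form: since $n \neq 2j$ and $n \neq 3d$, Lemma~\ref{complement} tells us $G^c = C_n(j)$ is \emph{not} chordal, so ${\rm reg}(I(G)) \neq 2$; combined with the general lower bound ${\rm reg}(I(G)) \geq 2$ (which holds for any edge ideal with at least one edge, as $\beta_{0,2}(I(G)) \neq 0$), this forces ${\rm reg}(I(G)) \geq 3$. Together with the upper bound this yields ${\rm reg}(I(G)) = 3$, completing the proof.

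The main obstacle I anticipate is the subcase $\frac{n}{d} = 4$ for the upper bound, since there $G$ need not be claw-free and Theorem~\ref{reg-results}$(iv)$ does not directly apply; handling it cleanly requires either an explicit co-chordal cover of $G$ of size $2$ or a careful application of the Kalai–Meshulam bound (Theorem~\ref{KM}), and verifying that the pieces are genuinely co-chordal is the step most prone to error. Everything else reduces to bookkeeping with Lemma~\ref{complement} and the cited regularity results.
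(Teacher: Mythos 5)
Your handling of the cases $n=2j$, $n=3d$, and $\tfrac{n}{d}\geq 5$ matches the paper's proof: Fr\"oberg's theorem (Theorem \ref{reg-results}$(ii)$) when $G^c$ is chordal, and gap-freeness plus claw-freeness (Lemma \ref{reg-general}) feeding into Theorem \ref{reg-results}$(iv)$ when $\tfrac{n}{d}\geq 5$, with the lower bound of $3$ coming from the failure of chordality of $G^c$. The genuine gap is exactly where you suspected it: the upper bound in the subcase $\tfrac{n}{d}=4$. Your concrete suggestion --- splitting each $4$-cycle of $G^c$ into two paths --- does not produce a co-chordal cover of $G$. Complementation converts unions into intersections: if $E(G^c)=E(H_1)\cup E(H_2)$, then $G=H_1^c\cap H_2^c$, and $H_1^c,H_2^c$ are \emph{super}graphs of $G$ rather than subgraphs, so neither Theorem \ref{reg-results}$(iii)$ nor Theorem \ref{KM} applies to them (and regularity is not monotone under adding edges). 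Moreover, even the auxiliary claim fails: the complement of a disjoint union of paths is not chordal once there are two or more components; for two disjoint paths with edge sets $\{ab,bc\}$ and $\{de,ef\}$, the set $\{a,b,d,e\}$ induces a $4$-cycle in the complement.

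What is actually needed, and what the paper supplies, is an explicit cover $G=G_1\cup G_2$ by \emph{subgraphs of $G$} with each $G_i^c$ chordal, so that ${\rm co\mbox{-}chord}(G)=2$ and Theorem \ref{reg-results}$(iii)$ gives ${\rm reg}(I(G))\leq 3$. In this subcase one may take $n=4j$; labelling the vertices $0,\ldots,4j-1$, the paper sets $V_1=\{0,\ldots,j-1\}\cup\{2j,\ldots,3j-1\}$ and $V_2=\{j,\ldots,2j-1\}\cup\{3j,\ldots,4j-1\}$, each of which induces a $K_{2j}$ in $G$, and defines $E(G_1)=\left(E(C_{4j}(1,\ldots,j-1))\cup E(G_{V_1})\right)\setminus E(G_{V_2})$ and symmetrically for $G_2$. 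One then verifies that $G=G_1\cup G_2$ and that each $G_i^c$ is chordal, because in $G_i^c$ one of the $V_\ell$ is an independent set while the other induces a clique, which rules out induced cycles of length at least $4$. This vertex-partition construction is the missing idea; without it (or some substitute) your argument does not establish the upper bound when $\tfrac{n}{d}=4$.
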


\begin{proof}
  Consider $G^c = C_n(j)$.  By Lemma \ref{complement}, $G^c$ consists
  of induced cycles of size $k = \frac{n}{d}$.  Because
  $1 \leq j \leq \lfloor \frac{n}{2} \rfloor$, we have $2 \leq k \leq n$.
  If $k=2$ or $3$, i.e., if $n=2j$ or $n=3d$,
  Lemma \ref{complement} and Theorem \ref{reg-results} $(ii)$
  combine to give ${\rm reg}(I(G)) = 2$.  If $k \geq 5$, then
  Lemmas \ref{complement} and \ref{reg-general} imply that
  $G$ is gap-free and claw-free (but not chordal), and  so
  Theorem \ref{reg-results} $(iv)$ implies ${\rm reg}(I(G)) = 3$.
  
  To compete the proof, we need to consider the case $k=4$, i.e., 
  $G = C_{4j}(1,\ldots,\widehat{j},\ldots,2j)$.
  By Lemma \ref{complement}, 
  $G^c$ is $j$ disjoint copies of $C_4$, and thus
  Theorem \ref{reg-results} $(ii)$ gives ${\rm reg}(I(G)) \geq 3$.
  To prove that ${\rm reg}(I(G)) = 3$, we 
  show ${\rm co\mbox{-}chord}(G)= 2$, and apply
  Theorem \ref{reg-results} $(iii)$.
  
  Label the vertices of $G$ as $0,1,\ldots, 4j-1$, and let 
\begin{eqnarray*}
V_1 &= &\{0,1,2,\ldots,j-1,2j,2j+1,\ldots,3j-1\} ~~\mbox{and}~~\\
V_2 &= &\{j,j+1,\ldots,2j-1,3j,3j+1,\ldots,4j-1\}.
\end{eqnarray*}
Observe that the induced graph on $V_1$ (and $V_2)$ is the complete
graph $K_{2j}$.  

Let $G_1$ be the graph with $V(G_1) = V(G)$ and edge set
$E(G_1) = (E(C_{4j}(1,\ldots,j-1)) \cup E(G_{V_1})) \setminus E(G_{V_2})$.
Similarly, we let $G_2$ be the graph with $V(G_2) = V(G)$ and edge
set $E(G_2) =  (E(C_{4j}(j+1,\ldots,2j)) \cup E(G_{V_2})) \setminus E(G_{V_1})$.

We now claim that $G = G_1 \cup G_2$, and furthermore, both
$G_1^c$ and $G_2^c$ are chordal, and consequently,
${\rm co\mbox{-}chord}(G) =2$.   The fact that $G = G_1 \cup G_2$
follows from the fact that 
\begin{eqnarray*}
E(G_1) \cup E(G_2)& =& E(C_{4j}(1,\ldots,j-1)) \cup E(C_{4j}(j+1,\ldots,2j))\\
&=& E(G_{4j}(1,\ldots,\widehat{j},\ldots,2j)).
\end{eqnarray*}

To show that $G_1^c$ is chordal, first note that the induced graph
on $V_1$, that is, $(G_1)_{V_1}$ is the complete graph $K_{2j}$.  
In addition, the vertices $V_2$ form an independent set of $G_1$.  
To see why, note that if $a,b \in V_2$ are such that $ab \in E(G)$,
then $ab \in E(G_{V_2})$.  But by the construction of $E(G_1)$, none
of the edges of $E(G_{V_2})$ belong to $E(G_1)$.  So $ab \not\in
E(G_1)$, and thus $V_2$ is an independent set in $G_1$.  

The above observations therefore imply that in $G_1^c$, the vertices
of $V_1$ form an independent set, and $(G_1^c)_{V_2}$ is the clique
$K_{2j}$.  To show that $G_1^c$ is chordal, suppose that $G_1^c$ has a induced
cycle of length $t \geq 4$ on $\{v_1,v_2,v_3,\ldots,v_t\}$.  Since
the induced graph on $(G_1^c)_{V_2}$ is a clique, at most two of
the vertices of $\{v_1,v_2,\ldots,v_t\}$ can belong to $V_2$.  Indeed,
if there were at least three $v_i,v_j,v_k \in \{v_1,v_2,\ldots,v_t\} \cap V_2$,
then the induced graph on these vertices is a three cycle, contradicting
the fact that  $\{v_1,v_2,\ldots,v_t\}$ is minimal induced cycle of
length $t \geq 4$.  But then at least $t-2 \geq 2$ vertices of 
$\{v_1,v_2,\ldots,v_t\}$ must belong to $V_1$, and in particular,
at least two of them are adjacent.  But this cannot happen since 
the vertices of $V_1$ are independent in $G_1^c$.  Thus, $G_1^c$ must
be a chordal graph.

The proof that $G_2^c$ is chordal is similar.  Note that the vertices
of $V_2$ are an independent set, and $(G_2^c)_{V_1}$ is the clique
$K_{2j}$.  The proof now proceeds as above.
\end{proof}


\section{Cubic circulant graphs}
\label{regularity-two-n-a-n}

We  now compute the regularity of the edge ideals of {\bf cubic circulant
graphs}, that is, a circulant graph 
where  every vertex has degree three.  Cubic circulant graphs
have the form $G = C_{2n}(a,n)$ with $1 \leq a \leq n$.  The main
result of this section can also be viewed as an application
of Theorem \ref{new-reg-result} to compute the regularity of a square-free
monomial ideal.

We begin with a structural result for cubic circulants
due to Davis and Domke.

\begin{theorem}\cite{DDG}\label{isomorphic-a-n}
Let $1 \leq a < n$  and $t={\rm gcd}(2n,a)$. 
\begin{enumerate}
\item[$(a)$] If $\frac{2n}{t}$ is even, then $C_{2n}(a,n)$ is isomorphic to $t$ copies of $C_{\frac{2n}{t}}(1,\frac{n}{t})$. 
\item[$(b)$] If $\frac{2n}{t}$ is odd, then $C_{2n}(a,n)$ is isomorphic to $\frac{t}{2}$ copies of $C_{\frac{4n}{t}}(2,\frac{2n}{t})$. 
\end{enumerate}
\end{theorem}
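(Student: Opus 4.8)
The plan is to place the vertices of $G = C_{2n}(a,n)$ on $\mathbb{Z}_{2n}$ and exploit cyclic subgroups. Set $g = \gcd(a,n)$; since $n/g$ is an integer, $2n/g = 2(n/g)$ is even. Because $a,n \in \langle g\rangle \leq \mathbb{Z}_{2n}$, every edge of $G$ joins two vertices of the same coset of $\langle g\rangle$, so $G$ is the disjoint union of the $g$ induced subgraphs $G_{r+\langle g\rangle}$, $r=0,\dots,g-1$, and these are mutually isomorphic because the translation $x\mapsto x+r$ is an automorphism of $G$ carrying $\langle g\rangle$ onto $r+\langle g\rangle$. So the first step is to identify $H := G_{\langle g\rangle}$: relabelling $\langle g\rangle = \{0,g,2g,\dots\}$ by $jg\mapsto j$ identifies its vertex set with $\mathbb{Z}_{2n/g}$, and reducing the relevant congruences modulo $2n = (2n/g)g$ shows the $a$- and $n$-edges of $G$ become exactly the $(a/g)$- and $(n/g)$-edges. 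Since $a<n$ gives $a/g < n/g = (2n/g)/2$, we conclude $H \cong C_{2n/g}(a/g,\,n/g)$, again a cubic circulant.

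The second step translates between $g$ and $t = \gcd(2n,a)$. One has $g \mid t \mid 2n$ and $\gcd(t,n)=g$, so $\lcm(t,n)=tn/g$ divides $2n$, forcing $t/g\in\{1,2\}$; also $t\mid n \iff t=g$, and $2n/t$ is even $\iff t\mid n$. Thus in case $(a)$ ($2n/t$ even) we get $t=g$, hence $2n/g = 2n/t$ and $\gcd(a/g,\,2n/g)=\gcd(a,2n)/g = t/g = 1$, so $a/g$ is a unit — hence odd — modulo the even number $2n/g$; while in case $(b)$ ($2n/t$ odd) we get $t=2g$, hence $n/g = 2n/t$ is odd, $2n/g = 4n/t$, and $\gcd(a/g,\,2n/g)= t/g = 2$, so $a/g = 2b$ with $\gcd(b,\,n/g)=1$.

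The third step puts $C_{2n/g}(a/g,n/g)$ into the advertised shape using a multiplier automorphism of $\mathbb{Z}_{2n/g}$ — multiplication by a unit $u$ sends $C_{2n/g}(S)$ isomorphically to $C_{2n/g}(uS)$. In case $(a)$ take $u = (a/g)^{-1}$ modulo $2n/g$; then $a/g\mapsto 1$, and because $u$ is odd and $2n/g = 2(n/g)$ we have $u(n/g)\equiv n/g$, so $H\cong C_{2n/g}(1,\,n/g) = C_{2n/t}(1,\,n/t)$ and $G$ is $t$ disjoint copies of it. In case $(b)$ put $M = 2n/g = 2(n/g)$ with $n/g$ odd; choose $u$ with $ub\equiv 1\pmod{n/g}$, and if $u$ is even replace it by $u + n/g$ (which is odd, since $n/g$ is odd), so that $u$ is a unit modulo $M$. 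Then $u(a/g) = 2ub \equiv 2\pmod M$ and $u(n/g)\equiv n/g\pmod M$ since $u$ is odd, so $H \cong C_M(2,\,n/g) = C_{4n/t}(2,\,2n/t)$ and $G$ is $t/2 = g$ disjoint copies of it.

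The delicate point is the parity bookkeeping in the third step: the multiplier $u$ must be odd — so that it is a genuine unit modulo the even number $2n/g$ and so that it fixes the ``$n/g$-distance'' — while having the prescribed effect on $a/g$ modulo the odd part $n/g$. Checking that such a $u$ exists in case $(b)$ is exactly where the hypothesis that $2n/t$ is odd is used; everything else — the coset decomposition of $G$ and the identification $H\cong C_{2n/g}(a/g,n/g)$ — is a routine verification once one decides to track $\gcd(a,n)$ in place of $\gcd(2n,a)$.
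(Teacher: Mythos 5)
Your argument is correct. Note first that the paper itself gives no proof of this statement: it is imported wholesale from Davis--Domke \cite{DDG}, so there is no in-paper argument to compare yours against. Your three-step proof is sound and self-contained: the coset decomposition over $\langle g\rangle\leq \mathbb{Z}_{2n}$ with $g=\gcd(a,n)$ correctly splits $G$ into $g$ isomorphic induced pieces, each identified with $C_{2n/g}(a/g,n/g)$; the arithmetic relating $g$ to $t=\gcd(2n,a)$ (namely $\gcd(t,n)=g$, $t/g\in\{1,2\}$, and $t=g$ exactly when $2n/t$ is even) checks out; and the multiplier (\'Ad\'am) isomorphism $C_m(S)\cong C_m(uS)$ for $u$ a unit mod $m$ is applied legitimately, with the key parity point -- that an odd $u$ fixes the class of $n/g$ modulo $2n/g$ because $(u-1)(n/g)\equiv 0 \pmod{2n/g}$ -- handled explicitly, including the adjustment $u\mapsto u+n/g$ needed in case $(b)$ to make $u$ odd. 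Two tiny points worth recording if you write this up: $\gcd(2b,2(n/g))=2$ does give $\gcd(b,n/g)=1$ as you use, and one should observe that $t<2n$ (since $t\mid a$ and $a<n$) so that $2n/t\geq 3$ and the target circulants $C_{2n/t}(1,n/t)$ and $C_{4n/t}(2,2n/t)$ are genuine simple cubic graphs with two distinct connection classes. Neither is a gap, just bookkeeping.
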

\noindent
By combining the previous theorem with Theorem \ref{reg-results} $(i)$, to
compute the regularity of the edge ideal of any cubic circulant graph, it
suffices to compute the regularity of the edge ideals of $C_{2n}(1,n)$ and
$C_{2n}(2,n)$.  Observe that $n$ must be odd in $C_{2n}(2,n)$.  It will be convenient to use the representation
and labelling  of these two graphs in Figure \ref{cubic-picture}.
\begin{figure}[h!]
\centering
\begin{tikzpicture}
[scale=.70]]

\draw[thick] (-7,0) --(-7,1)--(-6,1.5)--(-4.5,2.5)--(-2.5,1.5)--(-2.5,0)--(-3.5,0)--(-3.5,1)--(-4.5,1.5)--(-6,2.5)--(-8,1.5)--(-8,0)--(-7,0);
\draw[thick] (-7,1)--(-8,1.5);
\draw[thick] (-6,1.5)--(-6,2.5);
\draw[thick] (-4.5,1.5)--(-4.5,2.5);
\draw[thick] (-3.5,1)--(-2.5,1.5);

\draw[thick,dashed] (-7,0) --(-7,-1)--(-6,-1.5)--(-4.5,-1.5)--(-3.5,-1)--(-3.5,0)--(-2.5,0)--(-2.5,-1.5)--(-4.5,-2.5)--(-6,-2.5)--(-8,-1.5)--(-8,0)--(-7,0);
\draw[thick,dashed] (-5.25,-1.5) --(-5.25,-2.5);

\draw [fill] (-7,0)  circle [radius=0.1];
\draw [fill] (-7,1) circle [radius=0.1];
\draw [fill] (-6,1.5) circle [radius=0.1];
\draw [fill] (-4.5,2.5) circle [radius=0.1];
\draw [fill] (-2.5,1.5) circle [radius=0.1];
\draw [fill] (-2.5,0) circle [radius=0.1];
\draw [fill] (-3.5,0) circle [radius=0.1];
\draw [fill] (-3.5,1) circle [radius=0.1];
\draw [fill] (-4.5,1.5) circle [radius=0.1];
\draw [fill] (-6,2.5)circle [radius=0.1];
\draw [fill] (-8,1.5) circle [radius=0.1];
\draw [fill] (-8,0) circle [radius=0.1];

\draw [fill] (-5.25,-1.5) circle [radius=0.1];
\draw [fill] (-5.25,-2.5) circle [radius=0.1];

\node at (-6.3,0) {$x_{n-2}$};
\node at (-6.3,0.7) {$x_{n-1}$};
\node at (-5.7,1.2) {$x_{n}$};
\node at (-4.5,3) {$x_{n+1}$};
\node at (-1.5,1.5) {$x_{n+2}$};
\node at (-1.5,0) {$x_{n+3}$};
\node at (-4,0) {$x_{3}$};
\node at (-4,0.7) {$x_{2}$};
\node at (-4.5,1.2) {$x_{1}$};
\node at (-6,3) {$x_{2n}$};
\node at (-9,1.5) {$x_{2n-1}$};
\node at (-9,0) {$x_{2n-2}$};

\node at (-5.25,-1) {$x_{i}$};
\node at (-5.25,-3) {$x_{n+i}$};

\draw[thick] (3.5,0) --(3.5,1)--(4.5,1.5)--(6,1.5)--(7,1)--(7,0)--(8,0)--(8,1.5)--(6,2.5)--(4.5,2.5)--(2.5,1.5)--(2.5,0)--(3.5,0);
\draw[thick] (3.5,1)--(2.5,1.5);
\draw[thick] (4.5,1.5)--(4.5,2.5);
\draw[thick] (6,1.5)--(6,2.5);
\draw[thick] (7,1)--(8,1.5);

\draw[thick,dashed] (3.5,0) --(3.5,-1)--(4.5,-1.5)--(6,-1.5)--(7,-1)--(7,0)--(8,0)--(8,-1.5)--(6,-2.5)--(4.5,-2.5)--(2.5,-1.5)--(2.5,0)--(3.5,0);
\draw[thick,dashed] (5.25,-1.5) --(5.25,-2.5);

\draw [fill] (3.5,0)  circle [radius=0.1];
\draw [fill] (3.5,1) circle [radius=0.1];
\draw [fill] (4.5,1.5) circle [radius=0.1];
\draw [fill] (6,1.5) circle [radius=0.1];
\draw [fill] (7,1) circle [radius=0.1];
\draw [fill] (7,0) circle [radius=0.1];
\draw [fill] (8,0) circle [radius=0.1];
\draw [fill] (8,1.5) circle [radius=0.1];
\draw [fill] (6,2.5) circle [radius=0.1];
\draw [fill] (4.5,2.5)circle [radius=0.1];
\draw [fill] (2.5,1.5) circle [radius=0.1];
\draw [fill] (2.5,0) circle [radius=0.1];

\draw [fill] (5.25,-1.5) circle [radius=0.1];
\draw [fill] (5.25,-2.5) circle [radius=0.1];

\node at (4.3,0) {$x_{2n-5}$};
\node at (4.2,0.6) {$x_{2n-3}$};
\node at (4.9,1.1) {$x_{2n-1}$};
\node at (6,1.1) {$x_{1}$};
\node at (6.5,0.7) {$x_{3}$};
\node at (6.5,0) {$x_{5}$};
\node at (9,0) {$x_{n+5}$};
\node at (9,1.5) {$x_{n+3}$};
\node at (6,3) {$x_{n+1}$};
\node at (4.5,3) {$x_{n-1}$};
\node at (1.5,1.5) {$x_{n-3}$};
\node at (1.5,0) {$x_{n-5}$};

\node at (5.25,-1) {$x_{i}$};
\node at (5.25,-3) {$x_{n+i}$};

\end{tikzpicture}
\caption{The graphs $C_{2n}(1,n)$ and $C_{2n}(2,n)$.}\label{cubic-picture}
\end{figure}
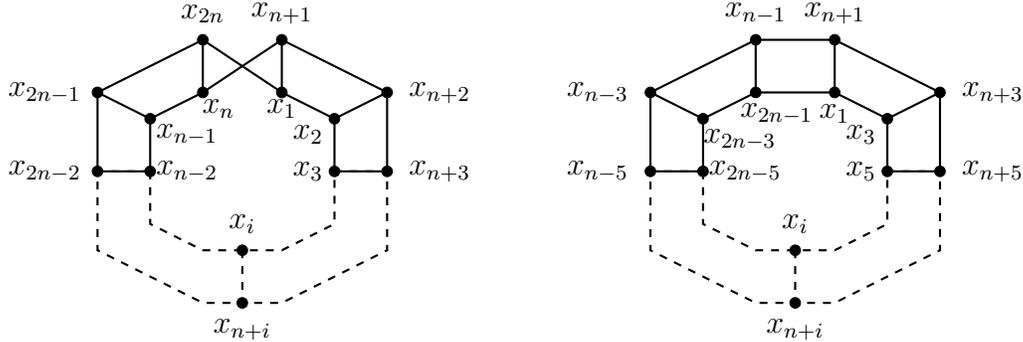

Our strategy is to use Theorem \ref{new-reg-result} to compute the
regularity of these two graphs.  Thus, we need bounds on 
${\rm reg}(I(G))$ and ${\rm pd}(I(G))$, and information about
the reduced Euler characteristic of ${\rm Ind}(G)$ when $G = C_{2n}(1,n)$
or $G_{2n}(2,n)$.  

We first bound the regularity and the projective dimension.  
We introducing the following three
families of graphs, where the $t\geq 1$ denotes the
number of ``squares'':
\begin{enumerate}
\item[$(i)$] The family $A_t$:
\[
\begin{tikzpicture}
[scale=.50]
\draw[thick] (-8,0.5) -- (-6,0.5) --(-4,0.5)--(-2,0.5)--(0,0.5)--(0,-0.5)--(-2,-0.5)--(-4,-0.5)--(-6,-0.5)-- (-6,0.5);
\draw[thick] (-8,-.5) -- (-6,-0.5);
\draw[thick] (-4,-0.5) --(-4,0.5);
\draw[thick] (-2,-0.5)--(-2,0.5);
\draw[thick,dashed] (0,.5)--(2,.5);
\draw[thick,dashed] (0,-.5)--(2,-.5);
\draw[thick] (2,0.5) --(4,0.5)--(6,0.5)--(6,-0.5)--(4,-0.5)--(2,-0.5)--(2,0.5);
\draw[thick] (4,-0.5) --(4,0.5);
 
\draw [fill] (-8,0.5) circle [radius=0.1];
\draw [fill] (-6,0.5) circle [radius=0.1];
\draw [fill] (-4,0.5) circle [radius=0.1];
\draw [fill] (-2,0.5) circle [radius=0.1];
\draw [fill] (0,0.5) circle [radius=0.1];
\draw [fill] (2,0.5) circle [radius=0.1];
\draw [fill] (4,0.5) circle [radius=0.1];
\draw [fill] (6,0.5) circle [radius=0.1];
\draw [fill] (-8,-0.5) circle [radius=0.1];
\draw [fill] (-6,-0.5) circle [radius=0.1];
\draw [fill] (-4,-0.5) circle [radius=0.1];
\draw [fill] (-2,-0.5) circle [radius=0.1];
\draw [fill] (0,-0.5) circle [radius=0.1];
\draw [fill] (2,-0.5) circle [radius=0.1];
\draw [fill] (4,-0.5) circle [radius=0.1];
\draw [fill] (6,-0.5) circle [radius=0.1];
\end{tikzpicture}\]
\item[$(ii)$]  The family $B_t$:
\[
\begin{tikzpicture}
[scale=.50]

\draw[thick] (-6,0.5) --(-4,0.5)--(-2,0.5)--(0,0.5)--(0,-0.5)--(-2,-0.5)--(-4,-0.5)--(-6,-0.5)-- (-6,0.5);
\draw[thick] (-4,-0.5) --(-4,0.5);
\draw[thick] (-2,-0.5)--(-2,0.5);

\draw[thick,dashed] (0,.5)--(2,.5);
\draw[thick,dashed] (0,-.5)--(2,-.5);

\draw[thick] (2,0.5) --(4,0.5)--(6,0.5)--(6,-0.5)--(4,-0.5)--(2,-0.5)--(2,0.5);
\draw[thick] (4,-0.5) --(4,0.5);

\draw [fill] (-6,0.5) circle [radius=0.1];
\draw [fill] (-4,0.5) circle [radius=0.1];
\draw [fill] (-2,0.5) circle [radius=0.1];
\draw [fill] (0,0.5) circle [radius=0.1];
\draw [fill] (2,0.5) circle [radius=0.1];
\draw [fill] (4,0.5) circle [radius=0.1];
\draw [fill] (6,0.5) circle [radius=0.1];

\draw [fill] (-6,-0.5) circle [radius=0.1];
\draw [fill] (-4,-0.5) circle [radius=0.1];
\draw [fill] (-2,-0.5) circle [radius=0.1];
\draw [fill] (0,-0.5) circle [radius=0.1];
\draw [fill] (2,-0.5) circle [radius=0.1];
\draw [fill] (4,-0.5) circle [radius=0.1];
\draw [fill] (6,-0.5) circle [radius=0.1];
\end{tikzpicture}
\]
\item[$(iii)$] The family $D_t$:
\[
\begin{tikzpicture}
[scale=.50]

\draw[thick] (-6,0.5) --(-4,0.5)--(-2,0.5)--(0,0.5)--(0,-0.5)--(-2,-0.5)--(-4,-0.5)--(-6,-0.5)-- (-6,0.5);
\draw[thick] (-8,-0.5) --(-6,-0.5);
\draw[thick] (6,0.5) --(8,0.5);

\draw[thick] (-4,-0.5) --(-4,0.5);
\draw[thick] (-2,-0.5)--(-2,0.5);

\draw[thick,dashed] (0,.5)--(2,.5);
\draw[thick,dashed] (0,-.5)--(2,-.5);

\draw[thick] (2,0.5) --(4,0.5)--(6,0.5)--(6,-0.5)--(4,-0.5)--(2,-0.5)--(2,0.5);
\draw[thick] (4,-0.5) --(4,0.5);

\draw [fill] (-6,0.5) circle [radius=0.1];
\draw [fill] (-4,0.5) circle [radius=0.1];
\draw [fill] (-2,0.5) circle [radius=0.1];
\draw [fill] (0,0.5) circle [radius=0.1];
\draw [fill] (2,0.5) circle [radius=0.1];
\draw [fill] (4,0.5) circle [radius=0.1];
\draw [fill] (6,0.5) circle [radius=0.1];

\draw [fill] (-8,-0.5) circle [radius=0.1];
\draw [fill] (-6,-0.5) circle [radius=0.1];
\draw [fill] (-4,-0.5) circle [radius=0.1];
\draw [fill] (-2,-0.5) circle [radius=0.1];
\draw [fill] (0,-0.5) circle [radius=0.1];
\draw [fill] (2,-0.5) circle [radius=0.1];
\draw [fill] (4,-0.5) circle [radius=0.1];
\draw [fill] (6,-0.5) circle [radius=0.1];
\draw [fill] (8,0.5) circle [radius=0.1];
\end{tikzpicture}
\]
\end{enumerate}

\begin{lemma} 
\label{projective-bounds} 
With the notation as above, we have
\begin{enumerate}
\item[$(i)$]
If $G = A_t$, then
\[{\rm reg}(I(G)) \leq
\begin{cases}
\frac{t+4}{2} & \mbox{if $t$ even} \\
\frac{t+3}{2} & \mbox{if $t$ odd}
\end{cases}
~~\mbox{and}~~~
{\rm pd}(I(G)) \leq
\begin{cases}
\frac{3t}{2}+1 & \mbox{if $t$ even} \\
\frac{3(t-1)}{2}+2 & \mbox{if $t$ odd.}
\end{cases}\]
\item[$(ii)$]
If $G = B_t$, then
\[{\rm reg}(I(G)) \leq
\begin{cases}
\frac{t+4}{2} & \mbox{if $t$ even} \\
\frac{t+3}{2} & \mbox{if $t$ odd.}
\end{cases}\]
\item[$(iii)$]
If $G = D_t$ and $t=2l+1$ with $l$ an odd number, then ${\rm reg}(I(G)) \leq \frac{t+3}{2}$.
\end{enumerate}

\end{lemma}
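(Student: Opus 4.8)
The plan is to prove all three bounds by a single induction on the number $t$ of squares, powered by two recursions. For a vertex $x$ of a graph $G$, Theorem \ref{reg-results}$(v)$ gives
\[
\reg(R/I(G)) \le \max\bigl\{\reg(R/I(G\setminus N_G[x]))+1,\ \reg(R/I(G\setminus x))\bigr\},
\]
while the short exact sequence
\[
0\longrightarrow R/(I(G):x)(-1)\longrightarrow R/I(G)\longrightarrow R/(I(G),x)\longrightarrow 0,
\]
combined with the identities $(I(G):x)=(I(G\setminus N_G[x]),N_G(x))$ and $(I(G),x)=(I(G\setminus x),x)$, gives
\[
\pd(R/I(G)) \le \max\bigl\{\pd(R/I(G\setminus N_G[x]))+\deg_G(x),\ \pd(R/I(G\setminus x))\bigr\}.
\]
Since $\reg(I)=\reg(R/I)+1$ and $\pd(I)=\pd(R/I)-1$, it is enough to bound these two invariants of $R/I(G)$. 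Note that the projective dimension bound is only required for $A_t$ in part $(i)$, and for it one could alternatively split off the star of a vertex and apply Theorem \ref{KM}$(ii)$.

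For the inductive step I would delete a vertex $x$ at one end of the strip: either the free endpoint of a terminal whisker or a corner of the terminal square. The point is that both $G\setminus x$ and $G\setminus N_G[x]$ then decompose, by Theorem \ref{reg-results}$(i)$, as a disjoint union of isolated vertices, a short path, and a smaller strip. Applying the induction hypothesis to the smaller strip, using the known small values of $\reg(R/I)$ and $\pd(R/I)$ for paths and isolated vertices, and collecting the inequalities via additivity produces the asserted ceilings. The two parity cases in each formula, and the hypothesis ``$l$ odd'' in $(iii)$, arise because the term coming from $G\setminus N_G[x]$ contributes its ``$+1$'' (respectively ``$+\deg_G(x)$'') to the bound only once every two steps of the recursion, so the induction must be carried through a matched pair of residue classes; for $D_t$ this is what forces the index to be restricted as in the statement, and after one or two steps $D_t$ reduces to disjoint unions of graphs already treated in parts $(i)$ and $(ii)$, plus paths. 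The base cases ($t\le 2$, together with $t=3$ for $D_t$) are graphs on a bounded number of vertices and are checked directly, several of them being co-chordal so that Theorem \ref{reg-results}$(ii)$ applies.

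The principal obstacle is that the families $A_t,B_t,D_t$ are not literally closed under these deletions: removing an end vertex typically creates one new pendant vertex, so the strips that appear have the shape ``ladder plus a whisker'' (or ``ladder plus a short whisker-path'') rather than exactly one of $A_s,B_s,D_s$. I would handle this by enlarging the list of graphs carried through the induction with one or two auxiliary ``ladder-with-whiskers'' families and proving the same $\reg$- and $\pd$-bounds for them within the same simultaneous induction; on this slightly larger finite list the recursion does close up. The delicate work is to organize this list so that it remains finite and to verify, in each residue class, that the floor/ceiling arithmetic of the various induction hypotheses combines to exactly the claimed bound; each individual check is routine once the vertex to be deleted at each step is fixed.
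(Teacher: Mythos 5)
Your strategy is genuinely different from the paper's, and as written it has a real gap: the induction is never actually closed. The paper avoids the whole ``not closed under deletion'' problem by using the Kalai--Meshulam subadditivity result (Theorem \ref{KM}) applied to an \emph{edge} decomposition into overlapping subgraphs: $A_t$ is written as the union of an $A_1$ (the whiskered end square) and an $A_{t-2}$, $B_t$ as $B_1\cup A_{t-2}$, and $D_{2l+1}$ (with $l$ odd) as a union of $l+1$ copies of $A_1$; since the pieces are again members of the same families, a two-step induction with the base cases $A_1,A_2,B_1,B_2$ checked by computer finishes everything in a few lines. Your vertex-deletion recursion, by contrast, leaves the families $A_t,B_t,D_t$ and you acknowledge this, but you neither specify the auxiliary ``ladder-with-whisker'' families, nor fix the deletion vertices, nor verify that the parity bookkeeping returns exactly the claimed ceilings. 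Since the entire content of the lemma \emph{is} that arithmetic, deferring it as ``routine'' leaves the proof unestablished; it is plausible the scheme can be completed, but that completion is the proof, and it is likely to involve more case analysis than the two-line Kalai--Meshulam computation it would replace.

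Two smaller points. First, your projective dimension recursion is off by one on the deletion branch: from the exact sequence $0\to R/(I:x)(-1)\to R/I\to R/(I,x)\to 0$ one gets
\[
\pd(R/I(G)) \le \max\bigl\{\pd(R/I(G\setminus N_G[x]))+\deg_G(x),\ \pd(R/I(G\setminus x))+1\bigr\},
\]
because $(I(G),x)=(I(G\setminus x))+(x)$ and adjoining the regular element $x$ raises the projective dimension by one; the missing $+1$ would corrupt the $\pd(I(A_t))$ bound, which is used downstream in Lemma \ref{pdim-bounds} and hence in Theorem \ref{maintheorem2}. Your fallback of splitting off a star and using Theorem \ref{KM}$(ii)$ is sound and is essentially what the paper does. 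Second, note that Theorem \ref{reg-results}$(v)$ as stated in the paper gives membership of $\reg(I(G))$ in a two-element set, which does imply the max-type upper bound you use, so that part of your setup is fine; indeed the paper itself uses exactly that tool later, in Lemma \ref{reg-bounds}, where the Kalai--Meshulam decomposition alone is not sharp enough.
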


\begin{proof}
$(i)$ The proof is by induction on $t$.  Via a direct computation
(for example, using {\it Macaulay2}), one finds ${\rm reg}(I(A_1)) = 2$,
${\rm reg}(I(A_2)) = 3$, ${\rm pd}(I(A_1)) = 2$, and ${\rm pd}(I(A_2)) =4$.
Our values agree with the upper bounds given in
the statement, so the base cases hold. 

Now suppose that $t \geq 3$.  The graph $A_t$
can be decomposed into the subgraphs $A_{1}$ and $A_{t-2}$, i.e.,
\[
\begin{tikzpicture}
[scale=.50]

\draw[thick] (-8,0.5) -- (-6,0.5) --(-4,0.5); 
\draw[thick] (-2,0.5)--(0,0.5)--(0,-0.5)--(-2,-0.5); 
\draw[thick] (-4,-0.5)--(-6,-0.5)-- (-6,0.5);
\draw[thick] (-8,-.5) -- (-6,-0.5);
\draw[thick] (-4,-0.5) --(-4,0.5);

\draw[thick,dashed] (0,.5)--(2,.5);
\draw[thick,dashed] (0,-.5)--(2,-.5);

\draw[thick] (2,0.5) --(4,0.5)--(6,0.5)--(6,-0.5)--(4,-0.5)--(2,-0.5)--(2,0.5);
\draw[thick] (4,-0.5) --(4,0.5);

\draw [fill] (-8,0.5) circle [radius=0.1];
\draw [fill] (-6,0.5) circle [radius=0.1];
\draw [fill] (-4,0.5) circle [radius=0.1];
\draw [fill] (-2,0.5) circle [radius=0.1];
\draw [fill] (0,0.5) circle [radius=0.1];
\draw [fill] (2,0.5) circle [radius=0.1];
\draw [fill] (4,0.5) circle [radius=0.1];
\draw [fill] (6,0.5) circle [radius=0.1];

\draw [fill] (-8,-0.5) circle [radius=0.1];
\draw [fill] (-6,-0.5) circle [radius=0.1];
\draw [fill] (-4,-0.5) circle [radius=0.1];
\draw [fill] (-2,-0.5) circle [radius=0.1];
\draw [fill] (0,-0.5) circle [radius=0.1];
\draw [fill] (2,-0.5) circle [radius=0.1];
\draw [fill] (4,-0.5) circle [radius=0.1];
\draw [fill] (6,-0.5) circle [radius=0.1];

\node at (-4,1) {$a$};
\node at (-2,1) {$a$};

\node at (-4,-1) {$b$};
\node at (-2,-1) {$b$};
\end{tikzpicture}\]
Suppose that $t$ is even.  By Theorem \ref{KM} and by induction
(and the fact that ${\rm reg}(R/I) ={\rm reg}(I) -1$)
we get
\[
{\rm reg}(R/I(A_t)) \leq {\rm reg}(R/I(A_1)) + {\rm reg}(R/I(A_{t-2})) 
\leq 1 + \frac{(t-2)+4}{2}-1 = \frac{t+4}{2}-1
\]
and 
\[
{\rm pd}(I(A_t)) \leq {\rm pd}(I(A_1)) + {\rm pd}(I(A_{t-2}) + 1 \leq 
2 + \frac{3(t-2)}{2} +1 + 1 =  \frac{3t}{2}+1.\]
Because the proof for when $t$ is odd is similar, we omit it.

$(ii)$ 
A direct computation shows ${\rm reg}(I(B_1)) = 2$ and 
${\rm reg}(I(B_2)) = 3$.  If $t \geq 3$, we decompose $B_t$ 
into the subgraphs $B_{1}$ and $A_{t-2}$, i.e.,
\[
\begin{tikzpicture}
[scale=.50]

\draw[thick] (-6,0.5) --(-4,0.5); 
\draw[thick] (-2,0.5)--(0,0.5)--(0,-0.5)--(-2,-0.5); 
\draw[thick] (-4,-0.5)--(-6,-0.5)-- (-6,0.5);
\draw[thick] (-4,-0.5) --(-4,0.5);

\draw[thick,dashed] (0,.5)--(2,.5);
\draw[thick,dashed] (0,-.5)--(2,-.5);

\draw[thick] (2,0.5) --(4,0.5)--(6,0.5)--(6,-0.5)--(4,-0.5)--(2,-0.5)--(2,0.5);
\draw[thick] (4,-0.5) --(4,0.5);

\draw [fill] (-6,0.5) circle [radius=0.1];
\draw [fill] (-4,0.5) circle [radius=0.1];
\draw [fill] (-2,0.5) circle [radius=0.1];
\draw [fill] (0,0.5) circle [radius=0.1];
\draw [fill] (2,0.5) circle [radius=0.1];
\draw [fill] (4,0.5) circle [radius=0.1];
\draw [fill] (6,0.5) circle [radius=0.1];

\draw [fill] (-6,-0.5) circle [radius=0.1];
\draw [fill] (-4,-0.5) circle [radius=0.1];
\draw [fill] (-2,-0.5) circle [radius=0.1];
\draw [fill] (0,-0.5) circle [radius=0.1];
\draw [fill] (2,-0.5) circle [radius=0.1];
\draw [fill] (4,-0.5) circle [radius=0.1];
\draw [fill] (6,-0.5) circle [radius=0.1];

\node at (-4,1) {$a$};
\node at (-2,1) {$a$};

\node at (-4,-1) {$b$};
\node at (-2,-1) {$b$};
\end{tikzpicture}\]
Suppose that $t$ is even. 
Since ${\rm reg}(I(B_1)) =2$, Theorem~\ref{KM} and part $(i)$ above gives us: 
\[
{\rm reg}(R/I(B_t)) \leq {\rm reg}(R/I(B_1)) + {\rm reg}(R/I(A_{t-2})) \leq 
\frac{(t-2)+4}{2} =\frac{t+2}{2}.
\]
Therefore ${\rm reg}(I(B_t)) \leq \frac{t+2}{2}+1=\frac{t+4}{2}$. When $t$ is odd, the proof is similar. 

$(iii)$ 
Because $t=2l+1$ with $l$ odd, the
graph $D_t$ can be decomposed into $l+1$ subgraphs of the form
$A_{1}$, i.e.,
\[
\begin{tikzpicture}
[scale=.50]
\draw[thick] (-8,0.5)--(-6,0.5);
\draw[thick] (-10,-0.5) --(-8,-0.5)--(-6,-0.5)--(-4,-0.5);
\draw[thick] (-8,-0.5) --(-8,0.5);
\draw[thick] (-6,-0.5) --(-6,0.5);
\node at (-6,1) {$a$};
\node at (-4,1) {$a$};
\node at (-4,-1) {$b$};
\node at (-2,-1) {$b$};

\draw[thick] (-2,-0.5)--(0,-0.5);
\draw[thick] (-4,0.5) --(-2,0.5)--(0,0.5)--(2,0.5);
\draw[thick] (-2,-0.5) --(-2,0.5);
\draw[thick] (0,-0.5) --(0,0.5);

\draw[thick] (4,0.5)--(6,0.5);
\draw[thick] (2,-0.5) --(4,-0.5)--(6,-0.5)--(8,-0.5);
\draw[thick] (4,-0.5) --(4,0.5);
\draw[thick] (6,-0.5) --(6,0.5);

\draw[thick] (10,-0.5)--(12,-0.5);
\draw[thick] (8,0.5) --(10,0.5)--(12,0.5)--(14,0.5);
\draw[thick] (10,-0.5) --(10,0.5);
\draw[thick] (12,-0.5) --(12,0.5);


\draw[thick,dashed] (6,.5)--(8,.5);
\draw[thick,dashed] (8,-.5)--(10,-.5);


\draw [fill] (-8,0.5) circle [radius=0.1];
\draw [fill] (-6,0.5) circle [radius=0.1];
\draw [fill] (-4,0.5) circle [radius=0.1];
\draw [fill] (-2,0.5) circle [radius=0.1];
\draw [fill] (0,0.5) circle [radius=0.1];
\draw [fill] (2,0.5) circle [radius=0.1];
\draw [fill] (4,0.5) circle [radius=0.1];
\draw [fill] (6,0.5) circle [radius=0.1];
\draw [fill] (8,0.5) circle [radius=0.1];
\draw [fill] (10,0.5) circle [radius=0.1];
\draw [fill] (12,0.5) circle [radius=0.1];
\draw [fill] (14,0.5) circle [radius=0.1];

\draw [fill] (-10,-0.5) circle [radius=0.1];
\draw [fill] (-8,-0.5) circle [radius=0.1];
\draw [fill] (-6,-0.5) circle [radius=0.1];
\draw [fill] (-4,-0.5) circle [radius=0.1];
\draw [fill] (-2,-0.5) circle [radius=0.1];
\draw [fill] (0,-0.5) circle [radius=0.1];
\draw [fill] (2,-0.5) circle [radius=0.1];
\draw [fill] (4,-0.5) circle [radius=0.1];
\draw [fill] (6,-0.5) circle [radius=0.1];
\draw [fill] (8,-0.5) circle [radius=0.1];
\draw [fill] (10,-0.5) circle [radius=0.1];
\draw [fill] (12,-0.5) circle [radius=0.1];


\end{tikzpicture}
\]
Since ${\rm reg}(I(A_1)) = 2$, by Theorem~\ref{KM} we get  
${\rm reg}(R/I(D_t)) \leq (l+1){\rm reg}(R/I(A_1)) = l+1$.
Thus ${\rm reg}(I(D_t)) \leq l+2=\frac{t+3}{2}$.
\end{proof}

We now bound the projective dimensions
of the edge ideals of $C_{2n}(1,n)$ and $C_{2n}(2,n)$.

\begin{lemma}\label{pdim-bounds}  Let $n \geq 4$.
\hspace{.1cm}\vspace{.1cm}

\begin{enumerate}
\item[$(i)$]
If $G = C_{2n}(1,n)$, then 
\[{\rm pd}(I(G)) \leq \begin{cases}
3k-1 & \mbox{if $n = 2k$} \\
3k+1 & \mbox{if $n= 2k+1$.} 
\end{cases}
\]
\item[$(ii)$]
If $G = C_{2n}(2,n)$, then ${\rm pd}(I(G)) \leq 3k+1$ where $n=2k+1$.
\end{enumerate}
\end{lemma}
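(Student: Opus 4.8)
The plan is to produce these upper bounds exactly as Lemma \ref{projective-bounds}$(i)$ was designed to be used: exhibit, for each of $C_{2n}(1,n)$ and $C_{2n}(2,n)$, an edge-wise decomposition $G = H_1 \cup H_2$ into two subgraphs, each isomorphic to a member of the family $A_t$, and then apply the subadditivity of projective dimension, Theorem \ref{KM}$(ii)$:
\[
{\rm pd}(I(G)) \leq {\rm pd}(I(A_{t_1})) + {\rm pd}(I(A_{t_2})) + 1 .
\]
By Lemma \ref{projective-bounds}$(i)$ the right-hand side is an explicit function of $t_1$, $t_2$, and their parities, so the problem reduces to choosing a decomposition whose parameters make this estimate collapse to the stated closed form.

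For the decomposition I would use the picture in Figure \ref{cubic-picture}: each of $C_{2n}(1,n)$ and $C_{2n}(2,n)$ is a cyclic band of $n$ four-cycles carrying a single M\"obius-type twist. Cutting the band through two of its four-cycles---one of them being the twisted one---breaks it into two linear ladders, and the four rail edges severed by the cuts become the pendant edges of the two resulting $A_t$ graphs (an edge that is pendant inside $H_i$ need not be pendant inside $G$, since the remaining edges at its endpoint belong to the other piece; in particular the two twist edges account for two of the four pendants). Because the two cuts destroy two four-cycles, the remaining $n-2$ distribute among the pieces as $t_1 + t_2 = n-2$, and one may slide the non-twist cut to control the parities of $t_1$ and $t_2$. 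For $G = C_{2n}(1,n)$ with $n = 2k$ I would make $t_1$ and $t_2$ both odd; then Lemma \ref{projective-bounds}$(i)$ gives ${\rm pd}(I(A_{t_1})) + {\rm pd}(I(A_{t_2})) = \tfrac{3(t_1+t_2)}{2} + 1 = 3k-2$, hence ${\rm pd}(I(G)) \leq 3k-1$. For $n = 2k+1$ the split $t_1+t_2 = 2k-1$ automatically has one even part and one odd part, giving ${\rm pd}(I(A_{t_1})) + {\rm pd}(I(A_{t_2})) = 3k$ and hence ${\rm pd}(I(G)) \leq 3k+1$; the graph $C_{2n}(2,n)$, where $n$ is forced odd, is cut in the same way. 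The few smallest values of $n$, where the band picture degenerates, I would check directly or with \emph{Macaulay2}.

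The step I expect to be the real work is making the decomposition tight rather than merely valid. A lazy split---for instance peeling the two twist edges off as a separate subgraph of two disjoint edges---loses roughly two in the final estimate, since the ``$+1$'' in Theorem \ref{KM}$(ii)$ is then paid on top of the additive constant already present in the projective dimension of that extra piece. So the care lies in placing the two cuts on the band, in checking that each arc is genuinely isomorphic to the asserted $A_{t_i}$ (in particular that the twist edges land precisely on its pendant vertices, with nothing else spoiling the isomorphism), and in tracking the parities of $t_1$ and $t_2$ so that the case-defined bounds of Lemma \ref{projective-bounds}$(i)$ simplify to $3k-1$, respectively $3k+1$. It is also worth noting that the very same decomposition, fed into Theorem \ref{KM}$(i)$ rather than $(ii)$, supplies the regularity upper bounds needed later, so the cut should be chosen with both applications in mind.
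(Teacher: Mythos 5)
Your proposal is correct and is essentially the paper's own argument: the paper cuts the cyclic band of $C_{2n}(1,n)$ (resp.\ $C_{2n}(2,n)$) into $A_1$ and $A_{n-3}$ (so $t_1+t_2=n-2$ with both parts odd when $n=2k$), and then applies Theorem \ref{KM}$(ii)$ together with Lemma \ref{projective-bounds}$(i)$ exactly as you describe. Your parity bookkeeping and the resulting totals $3k-1$ and $3k+1$ match the paper's computation.
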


\begin{proof}
$(i)$ Let $G = C_{2n}(1,n)$, suppose that $n= 2k+1$. 
The graph $C_{2n}(1,n)$ can be decomposed 
into the subgraphs $A_{1}$ and $A_{2k-2}$, i.e.,
\[
\begin{tikzpicture}
[scale=.50]

\draw[thick] (-8,0.5) -- (-6,0.5) --(-4,0.5); 
\draw[thick] (-2,0.5)--(0,0.5)--(0,-0.5)--(-2,-0.5); 
\draw[thick] (-4,-0.5)--(-6,-0.5)-- (-6,0.5);
\draw[thick] (-8,-.5) -- (-6,-0.5);
\draw[thick] (-4,-0.5) --(-4,0.5);

\draw[thick,dashed] (0,.5)--(2,.5);
\draw[thick,dashed] (0,-.5)--(2,-.5);

\draw[thick] (2,0.5) --(4,0.5)--(6,0.5)--(6,-0.5)--(4,-0.5)--(2,-0.5)--(2,0.5);
\draw[thick] (4,-0.5) --(4,0.5);

\draw [fill] (-8,0.5) circle [radius=0.1];
\draw [fill] (-6,0.5) circle [radius=0.1];
\draw [fill] (-4,0.5) circle [radius=0.1];
\draw [fill] (-2,0.5) circle [radius=0.1];
\draw [fill] (0,0.5) circle [radius=0.1];
\draw [fill] (2,0.5) circle [radius=0.1];
\draw [fill] (4,0.5) circle [radius=0.1];
\draw [fill] (6,0.5) circle [radius=0.1];

\draw [fill] (-8,-0.5) circle [radius=0.1];
\draw [fill] (-6,-0.5) circle [radius=0.1];
\draw [fill] (-4,-0.5) circle [radius=0.1];
\draw [fill] (-2,-0.5) circle [radius=0.1];
\draw [fill] (0,-0.5) circle [radius=0.1];
\draw [fill] (2,-0.5) circle [radius=0.1];
\draw [fill] (4,-0.5) circle [radius=0.1];
\draw [fill] (6,-0.5) circle [radius=0.1];

\node at (-8,1) {$x_{n}$};
\node at (-8,-1) {$x_{2n}$};
\node at (-6,1) {$x_{n+1}$};
\node at (-6,-1) {$x_{1}$};

\node at (6,1) {$x_{2n}$};
\node at (6,-1) {$x_{n}$};

\node at (-4,1) {$x_{n+2}$};
\node at (-2,1) {$x_{n+2}$};

\node at (-4,-1) {$x_2$};
\node at (-2,-1) {$x_2$};
\node at (6,1) {$x_{2n}$};
\node at (6,-1) {$x_{n}$};

\end{tikzpicture}
\]
Note that since $n \geq 4$ and $n$ odd, $2k -2 \geq 2$.
Combining Theorem~\ref{KM} and Lemma~\ref{projective-bounds} we get: 
\[
{\rm pd}(I(C_{2n}(1,n))) \leq {\rm pd}(I(A_{2k-2}))+{\rm pd}(I(A_1))+1 \leq 
\left(\frac{3(2k-2)}{2}+1\right)+3=3k+1.
\]

If $n=2k$, $C_{2n}(1,n)$ can be decomposed 
as in the previous case with the only difference 
being that $C_{2n}(1,n)$ can be decomposed 
into the union of the subgraphs $A_{1}$ and $A_{2k-3}$. 
By Theorem~\ref{KM} and Lemma~\ref{projective-bounds}:  
\[
{\rm pd}(I(C_{2n}(1,n))) \leq {\rm pd}(I(A_{2k-3}))+{\rm pd}(I(A_1))+1 \leq 
\left(\frac{3(2k-4)}{2}+2\right)+3=3k-1.
\]

$(ii)$ Let $G = C_{2n}(2,n)$ with $n= 2k+1$. We can draw $G$ 
as
\[
\begin{tikzpicture}
[scale=.50]

\draw[thick] (-8,0.5) -- (-6,0.5) --(-4,0.5)--(-2,0.5)--(0,0.5)--(0,-0.5)--(-2,-0.5)--(-4,-0.5)--(-6,-0.5)-- (-6,0.5);
\draw[thick] (-8,-.5) -- (-6,-0.5);
\draw[thick] (-4,-0.5) --(-4,0.5);
\draw[thick] (-2,-0.5)--(-2,0.5);

\draw[thick,dashed] (0,.5)--(2,.5);
\draw[thick,dashed] (0,-.5)--(2,-.5);

\draw[thick] (2,0.5) --(4,0.5)--(6,0.5)--(6,-0.5)--(4,-0.5)--(2,-0.5)--(2,0.5);
\draw[thick] (4,-0.5) --(4,0.5);

\draw [fill] (-8,0.5) circle [radius=0.1];
\draw [fill] (-6,0.5) circle [radius=0.1];
\draw [fill] (-4,0.5) circle [radius=0.1];
\draw [fill] (-2,0.5) circle [radius=0.1];
\draw [fill] (0,0.5) circle [radius=0.1];
\draw [fill] (2,0.5) circle [radius=0.1];
\draw [fill] (4,0.5) circle [radius=0.1];
\draw [fill] (6,0.5) circle [radius=0.1];

\draw [fill] (-8,-0.5) circle [radius=0.1];
\draw [fill] (-6,-0.5) circle [radius=0.1];
\draw [fill] (-4,-0.5) circle [radius=0.1];
\draw [fill] (-2,-0.5) circle [radius=0.1];
\draw [fill] (0,-0.5) circle [radius=0.1];
\draw [fill] (2,-0.5) circle [radius=0.1];
\draw [fill] (4,-0.5) circle [radius=0.1];
\draw [fill] (6,-0.5) circle [radius=0.1];
\node at (-8,1) {$x_{2n}$};
\node at (-8,-1) {$x_{n}$};
\node at (-6,1) {$x_{n+1}$};
\node at (-6,-1) {$x_{1}$};

\node at (6,1) {$x_{2n}$};
\node at (6,-1) {$x_{n}$};

\end{tikzpicture}
\]
The previous representation of $G$ contains $2k$ squares. 
Then the graph $G$ can be decomposed 
into the subgraphs $A_{1}$ and $A_{2k-2}$, and 
the proof runs as in $(i)$.
\end{proof}

We now determine bounds on the regularity.

\begin{lemma}\label{reg-bounds}Let $n \geq 4$.
\hspace{.1cm}\vspace{.1cm}

\begin{enumerate}
\item[$(i)$]
If $G = C_{2n}(1,n)$, then 
\[{\rm reg}(I(G)) \leq \begin{cases}
k+1 & \mbox{if $n = 2k$, or if $n=2k+1$ and $k$ odd} \\
k+2 & \mbox{if $n= 2k+1$ and $k$ even.} 
\end{cases}
\]
\item[$(ii)$]
If $G = C_{2n}(2,n)$, then 
\[{\rm reg}(I(G)) \leq \begin{cases}
k+1 & \mbox{if $n=2k+1$ and $k$ even} \\
k+2 & \mbox{if $n=2k+1$ and $k$ odd.} 
\end{cases}
\]
\end{enumerate}
\end{lemma}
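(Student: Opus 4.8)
The plan is to follow the template of the proof of Lemma~\ref{pdim-bounds}: realize each of $C_{2n}(1,n)$ and $C_{2n}(2,n)$ as a union $G = G_1 \cup \dots \cup G_s$ of subgraphs belonging to the families $A_t$, $B_t$, $D_t$ of Lemma~\ref{projective-bounds}, and then apply the Kalai--Meshulam inequality Theorem~\ref{KM}$(i)$, i.e.\ $\reg(R/I(G)) \le \sum_{i=1}^s \reg(R/I(G_i))$, together with the regularity estimates of Lemma~\ref{projective-bounds} and the identity $\reg(R/I) = \reg(I) - 1$. As in that proof, the ``unrolled ladder'' drawings of $C_{2n}(1,n)$ and $C_{2n}(2,n)$ in Figure~\ref{cubic-picture} make these decompositions transparent: each graph is a ladder of squares with its two ends identified, so cutting it along suitable rungs produces ladder-with-pendants pieces of the required shape. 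Finitely many small values of $n$ are disposed of by direct computation (e.g.\ in \emph{Macaulay2}), exactly as in the base cases of Lemma~\ref{projective-bounds}.

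First I would treat the ``generic'' parities, where the decomposition already used for the projective dimension suffices. For $G = C_{2n}(1,n)$ with $n = 2k$ we use $G = A_1 \cup A_{2k-3}$; since $2k-3$ is odd, Lemma~\ref{projective-bounds}$(i)$ gives $\reg(R/I(A_{2k-3})) \le \tfrac{(2k-3)+3}{2} - 1 = k-1$, and as $\reg(R/I(A_1)) = 1$ we obtain $\reg(R/I(G)) \le k$, i.e.\ $\reg(I(G)) \le k+1$. The decomposition $G = A_1 \cup A_{2k-2}$ (valid since $n\ge 4$ is odd) handles $n = 2k+1$: now $2k-2$ is even, so Lemma~\ref{projective-bounds}$(i)$ gives $\reg(R/I(A_{2k-2})) \le k$ and hence $\reg(I(G)) \le k+2$, which is precisely the asserted bound when $k$ is even. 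The identical argument applied to $C_{2n}(2,n) = A_1 \cup A_{2k-2}$ gives $\reg(I(C_{2n}(2,n))) \le k+2$, matching the assertion when $k$ is odd. (Recall $n$ is forced to be odd in $C_{2n}(2,n)$ by Theorem~\ref{isomorphic-a-n}.)

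What is left --- and this is the crux --- is to improve the bound by $1$ in the two exceptional cases: $C_{2n}(1,n)$ with $n = 2k+1$ and $k$ odd, and $C_{2n}(2,n)$ with $n = 2k+1$ and $k$ even. The ``one large piece plus $A_1$'' decomposition above loses a unit precisely because its large piece is an $A_t$ with $t$ even, which carries the rounded bound $\tfrac{t+4}{2}$ rather than the sharper $\tfrac{t+3}{2}$ available for odd $t$. The remedy is to cut $G$ into two (or, if convenient, three) pieces each lying in a ``good'' case of Lemma~\ref{projective-bounds} --- an $A_t$ with $t$ odd, a $B_t$ with $t$ odd, or a $D_t$ with $t = 2\ell+1$ and $\ell$ odd --- so that, after summing via Theorem~\ref{KM}$(i)$, the total regularity bound drops by one. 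The main obstacle is purely combinatorial bookkeeping: when $k$ has the exceptional parity one must verify that the ladder of $n$ squares underlying $G$ really can be split along rungs into arcs whose associated subgraphs (i) together cover every edge of $G$ and (ii) all fall into these good cases --- equivalently, that the relevant square-counts and the positions of the cut rungs have the correct residues mod $4$, which is exactly where the restriction $t\equiv 3\pmod 4$ in Lemma~\ref{projective-bounds}$(iii)$ becomes essential. Once such a decomposition is exhibited, Theorem~\ref{KM}$(i)$, Lemma~\ref{projective-bounds}, and $\reg(I(G)) = \reg(R/I(G)) + 1$ yield the stated bounds; alternatively one can run the deletion recursion Theorem~\ref{reg-results}$(v)$ at a vertex $x_i$, since both $G \setminus x_i$ and $G \setminus N_G[x_i]$ are ladder-type graphs from the families above, and induct on $n$ --- the same parity check reappears there.
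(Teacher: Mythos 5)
Your handling of the three ``generic'' cases ($n=2k$; $n=2k+1$ with $k$ even for $C_{2n}(1,n)$; $n=2k+1$ with $k$ odd for $C_{2n}(2,n)$) is exactly the paper's argument: decompose into $A_1$ and $A_{2k-3}$ (resp.\ $A_{2k-2}$) and apply Theorem~\ref{KM}$(i)$ with Lemma~\ref{projective-bounds}. The problem is your primary strategy for the two exceptional cases. A pure Kalai--Meshulam decomposition into pieces from the families $A_t$, $B_t$, $D_t$ cannot produce the improved bound $k+1$, for a parity reason you can make precise: the unrolled ladder for $n=2k+1$ carries $2k-1$ squares in total, and every admissible piece with $t_i$ squares contributes at least $\frac{t_i+1}{2}$ to $\sum_i\reg(R/I(G_i))$ (with $\frac{t_i+2}{2}$ when $t_i$ is even). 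Since $\sum t_i=2k-1$ is odd, an odd number of the $t_i$ are odd, and one checks $\sum_i\reg(R/I(G_i))\geq\frac{(2k-1)+a+2b}{2}\geq k+1$ for any decomposition with $s=a+b\geq 2$ pieces; a single piece ($s=1$) cannot cover $G$, since $A_{2k-1}$ has $6k$ edges while the cubic graph $G$ has $6k+3$. So Theorem~\ref{KM}$(i)$ alone is stuck at $\reg(I(G))\leq k+2$ in these cases, and your ``combinatorial bookkeeping'' obstacle is in fact an insurmountable obstruction, not a bookkeeping issue.

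Your fallback --- the deletion recursion of Theorem~\ref{reg-results}$(v)$ --- is the route the paper actually takes, but as stated it is only a gesture and omits the content. No induction on $n$ is involved; instead one applies Theorem~\ref{reg-results}$(v)$ \emph{twice} (first at $x_1$, then at $x_{n+1}$ in $W=G\setminus x_1$), because $G\setminus x_1$ by itself is not one of the listed families. One must then identify the three resulting graphs explicitly: for $C_{2n}(1,n)$ with $k$ odd, $G\setminus N[x_1]\cong W\setminus N[x_{n+1}]\cong D_{2k-3}$ with $2k-3=2(k-2)+1$ and $k-2$ odd (this is where the $t\equiv 3\pmod 4$ hypothesis of Lemma~\ref{projective-bounds}$(iii)$ enters, as you correctly anticipated), while $W\setminus x_{n+1}\cong B_{2k-1}$; combining gives $\reg(I(G))\leq\max\{k+1,\,k+1\}=k+1$. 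For $C_{2n}(2,n)$ with $k$ even the analogous step needs the further decomposition $G\setminus N[x_1]=D_{2k-5}\cup A_1$ with $k-3$ odd. Without these identifications and parity checks the exceptional cases remain unproved.
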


\begin{proof}
$(i)$ Let $G = C_{2n}(1,n)$.   We now consider three cases.

\noindent
{\it Case 1.} $n = 2k$.
\noindent

In Lemma~\ref{pdim-bounds} $(i)$ we saw that $G$ can be decomposed into the subgraphs $A_{1}$ and $A_{2k-3}$. 
By Theorem~\ref{KM} and Lemma~\ref{projective-bounds} we get:
\[
{\rm reg}(R/I(G)) \leq {\rm reg}(R/I(A_1)) + {\rm reg}(R/I(A_{2k-3})) \leq 
k.\]

\noindent
{\it Case 2.} $n=2k+1$ with $k$ an odd number.
\noindent

Using Lemma~\ref{reg-results} $(v)$, we have: 
\[
{\rm reg}(I(G)) \in \{{\rm reg}(I(G\setminus x_1),{\rm reg}(I(G\setminus N[x_1])+1\}.\]
If we set $W = G \setminus x_1$, then by applying Lemma \ref{reg-results}
$(v)$ again, we have
\[ 
{\rm reg}(I(G)) \in \{{\rm reg}(I(W\setminus x_{n+1}),{\rm reg}(I(W\setminus 
N[x_{n+1}])+1,
{\rm reg}(I(G\setminus N[x_1])+1\}.
\]

We have $G \setminus N[x_1] \cong W \setminus N[x_{n+1}] \cong 
D_{2k-3}$. Moreover, $2k-3=2(k-2)+1$, and since $k$ is an odd number, 
$k-2$ is also odd.  Thus by Lemma~\ref{projective-bounds} $(iii)$ 
we obtain ${\rm reg}(I(D_{2k-3}))\leq \frac{2k-3+3}{2}=k$.  
On the other hand, the graph $W \setminus x_{n+1} =
(G\setminus x_1) \setminus x_{n+1} \cong B_{2k-1}$, so 
by Lemma~\ref{projective-bounds} $(ii)$ we have 
${\rm reg}(I(W \setminus x_{n+1}))\leq \frac{2k-1+3}{2} \leq k+1$.
Thus, ${\rm reg}(I(G)) \leq k+1$.

\noindent
{\it Case 3.} $n=2k+1$ with $k$ an even number.
\noindent 

In Lemma~\ref{pdim-bounds} $(i)$ we saw that $G$ can be 
decomposed into the subgraphs $A_{1}$ and $A_{2k-2}$, and the 
proof runs as in Case 1.

$(ii)$ Let $G = C_{2n}(2,n)$. We consider two cases.

\noindent
{\it Case 1.} $n = 2k+1$ with $k$ an even number. 
\noindent

As in the second case of $(i)$, by Lemma~\ref{reg-results} $(v)$ we
have 
\[ 
{\rm reg}(I(G)) \in \{{\rm reg}(I(W\setminus x_{n+1}),{\rm reg}(I(W\setminus 
N[x_{n+1}])+1,
{\rm reg}(I(G\setminus N[x_1])+1\}.
\]
where $W = G\setminus x_1$.   In particular, $W \setminus N[x_{n+1}]
\cong G \setminus N[x_1]$.   The graph $G\setminus N[x_1]$ 
can be represented as
\[
\begin{tikzpicture}
[scale=.50]

\draw[thick] (-6,0.5) --(-4,0.5)--(-2,0.5)--(0,0.5)--(0,-0.5)--(-2,-0.5)--(-4,-0.5)--(-6,-0.5)-- (-6,0.5);
\draw[thick] (-8,-0.5) --(-6,-0.5);
\draw[thick] (6,-0.5) --(8,-0.5);

\draw[thick] (-4,-0.5) --(-4,0.5);
\draw[thick] (-2,-0.5)--(-2,0.5);

\draw[thick,dashed] (0,.5)--(2,.5);
\draw[thick,dashed] (0,-.5)--(2,-.5);

\draw[thick] (2,0.5) --(4,0.5)--(6,0.5)--(6,-0.5)--(4,-0.5)--(2,-0.5)--(2,0.5);
\draw[thick] (4,-0.5) --(4,0.5);

\draw [fill] (-6,0.5) circle [radius=0.1];
\draw [fill] (-4,0.5) circle [radius=0.1];
\draw [fill] (-2,0.5) circle [radius=0.1];
\draw [fill] (0,0.5) circle [radius=0.1];
\draw [fill] (2,0.5) circle [radius=0.1];
\draw [fill] (4,0.5) circle [radius=0.1];
\draw [fill] (6,0.5) circle [radius=0.1];

\draw [fill] (-8,-0.5) circle [radius=0.1];
\draw [fill] (-6,-0.5) circle [radius=0.1];
\draw [fill] (-4,-0.5) circle [radius=0.1];
\draw [fill] (-2,-0.5) circle [radius=0.1];
\draw [fill] (0,-0.5) circle [radius=0.1];
\draw [fill] (2,-0.5) circle [radius=0.1];
\draw [fill] (4,-0.5) circle [radius=0.1];
\draw [fill] (6,-0.5) circle [radius=0.1];
\draw [fill] (8,-0.5) circle [radius=0.1];

\end{tikzpicture}
\]
The previous representation of $G\setminus N[x_1]$ contains $2k-3$ squares. 
It follows that   $G\setminus N[x_1]$ can be decomposed 
into the subgraphs $D_{2k-5}$ and $A_{1}$, i.e.,
\[
\begin{tikzpicture}
[scale=.50]

\draw[thick] (-6,0.5) --(-4,0.5)--(-2,0.5)--(0,0.5)--(0,-0.5)--(-2,-0.5)--(-4,-0.5)--(-6,-0.5)-- (-6,0.5);
\draw[thick] (-8,-0.5) --(-6,-0.5);

\draw[thick] (-4,-0.5) --(-4,0.5);
\draw[thick] (-2,-0.5)--(-2,0.5);

\draw[thick,dashed] (0,.5)--(2,.5);
\draw[thick,dashed] (0,-.5)--(2,-.5);

\draw[thick] (2,0.5) --(2,-0.5);
\draw[thick] (2,0.5) --(4,0.5);

\draw[thick] (8,0.5) --(10,0.5)--(10,-0.5)--(8,-0.5)--(8,0.5);
\draw[thick] (10,-0.5) --(12,-0.5);
\draw[thick] (8,-0.5)--(6,-0.5);

\draw [fill] (-6,0.5) circle [radius=0.1];
\draw [fill] (-4,0.5) circle [radius=0.1];
\draw [fill] (-2,0.5) circle [radius=0.1];
\draw [fill] (0,0.5) circle [radius=0.1];
\draw [fill] (2,0.5) circle [radius=0.1];
\draw [fill] (4,0.5) circle [radius=0.1];

\draw [fill] (-8,-0.5) circle [radius=0.1];
\draw [fill] (-6,-0.5) circle [radius=0.1];
\draw [fill] (-4,-0.5) circle [radius=0.1];
\draw [fill] (-2,-0.5) circle [radius=0.1];
\draw [fill] (0,-0.5) circle [radius=0.1];
\draw [fill] (2,-0.5) circle [radius=0.1];

\draw [fill] (8,0.5) circle [radius=0.1];
\draw [fill] (10,0.5) circle [radius=0.1];
\draw [fill] (10,-0.5) circle [radius=0.1];
\draw [fill] (8,-0.5) circle [radius=0.1];
\draw [fill] (6,-0.5) circle [radius=0.1];
\draw [fill] (12,-0.5) circle [radius=0.1];

\node at (4,1) {$a$};
\node at (2,-1) {$b$};

\node at (8,1) {$a$};
\node at (6,-1) {$b$};

\end{tikzpicture}
\]
Note that $2k-5=2(k-3)+1$, and because $k$ is even, then $k-3$ is odd. 
Using Theorem~\ref{KM} and 
Lemma~\ref{projective-bounds} we get:
\[{\rm reg}(R/I(G\setminus N[x_1])) \leq {\rm reg}(R/I(D_{2k-5})) + {\rm reg}(R/I(A_{1})) \leq \frac{2k-2}{2}=k-1.\]

The graph $W \setminus x_{n+1} \cong B_{2k-1}$.  So 
by Lemma~\ref{projective-bounds} $(ii)$ we have 
${\rm reg}(I(W \setminus x_{n+1}))\leq \frac{2k-1+3}{2}=k+1$.
Consequently,  ${\rm reg}(I(G)) \leq k+1$, as desired.

\noindent
{\it Case 2.} $n = 2k+1$ with $k$ an odd number. 
\noindent 

The result follows from the fact that 
the graphs $C_{2n}(2,n)$ 
can be decomposed into the subgraphs $A_{1}$ and $A_{2k-2}$,
and so ${\rm reg}(I(G)) \leq {\rm reg}(I(A_1)) + {\rm reg}(I(A_{2k-2}))-1$.
\end{proof}

Our final ingredient is a 
result of Hoshino \cite[Theorem 2.26]{RH} (also see Brown-Hoshino 
\cite[Theorem 3.5]{BH}) which
describes the independence polynomial 
for cubic circulant graphs.

\begin{theorem}\label{independencepoly}
For each $n \geq 2$, set 
\[I_n(x) = 1 + \sum_{\ell=0}^{\lfloor \frac{n-2}{4} \rfloor} \frac{2n}{2\ell+1}
\binom{n-2\ell-2}{2\ell}x^{2\ell+1}(1+x)^{n-4\ell+2}.\]
\begin{enumerate}
\item[$(i)$]  If $G = C_{2n}(1,n)$ with $n$ even, or if $G = C_{2n}(2,n)$ with
$n$ odd, then
$I(G,x) = I_n(x)$.
\item[$(ii)$] If $G = C_{2n}(1,n)$ and $n$ odd, then $I(G,x) = I_n(x) + 2x^n$.
\end{enumerate}
\end{theorem}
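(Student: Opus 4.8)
Since the statement is Hoshino's \cite[Theorem 2.26]{RH} (see also Brown--Hoshino \cite[Theorem 3.5]{BH}), in the paper one may simply cite those sources; the plan below is a self-contained transfer-matrix proof. First I would identify both graphs as ``ladders that close up.'' By definition $C_{2n}(1,n)$ is the M\"obius ladder: the cycle $C_{2n}$ together with the $n$ diameters $\{x_i,x_{i+n}\}$. When $n$ is odd, the type-$2$ edges of $C_{2n}(2,n)$ split into the two $n$-cycles on the even- and odd-indexed vertices, joined by the matching $\{x_i,x_{i+n}\}$, so $C_{2n}(2,n)=C_n\times K_2$, the prism on $n$ rungs (this is just unwinding Figure~\ref{cubic-picture}, and Theorem~\ref{isomorphic-a-n} confirms no further splitting occurs). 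In either case I would enumerate an independent set column by column around the $n$-cycle of rungs: each column $\{x_i,x_{i+n}\}$ is empty, ``top'' only, or ``bottom'' only (never both, since a rung is an edge), and consecutive columns interact only through the two rail edges, which forbid two consecutive tops and two consecutive bottoms. Weighting a chosen vertex by $x$ gives the transfer matrix
\[
T=T(x)=\begin{pmatrix} 1 & x & x\\ 1 & 0 & x\\ 1 & x & 0\end{pmatrix},
\]
so that $I(C_n\times K_2,x)=\operatorname{tr}(T^n)$, while the twist in the M\"obius ladder inserts the permutation matrix $P$ swapping the top and bottom states: $I(C_{2n}(1,n),x)=\operatorname{tr}(PT^n)$.

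Next I would diagonalise $T(x)$: the vector $(0,1,-1)^{\top}$ is an eigenvector with eigenvalue $-x$, and the complementary (top--bottom symmetric) plane carries the remaining eigenvalues $\mu,\lambda_+$, the roots of $z^2-(1+x)z-x=0$ (read off from $\operatorname{tr}T=1$ and $\det T=x^2$). Since $P$ fixes that plane and negates $(0,1,-1)^{\top}$,
\[
I(C_n\times K_2,x)=V_n(x)+(-x)^n
\qquad\text{and}\qquad
I(C_{2n}(1,n),x)=V_n(x)-(-x)^n,
\]
where $V_n(x)=\mu^n+\lambda_+^n$ satisfies $V_0=2$, $V_1=1+x$, and $V_n=(1+x)V_{n-1}+xV_{n-2}$. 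The one genuinely computational point --- which I would isolate as a lemma --- is the polynomial identity $V_n(x)-x^n=I_n(x)$. It can be extracted from the Waring formula $V_n=\sum_{k\ge0}\frac{n}{n-k}\binom{n-k}{k}(1+x)^{n-2k}x^{k}$ by a resummation; or, producing the coefficients $\frac{2n}{2\ell+1}\binom{n-2\ell-2}{2\ell}$ and the exponents $2\ell+1$ and $n-4\ell-2$ directly, from a combinatorial count of $I(C_n\times K_2,x)$ in which the occupied columns form maximal runs around $C_n$, each run of occupied columns contributing a factor $2$ (its two sides alternate) and each occupied vertex a factor $x$: summing the configurations that do \emph{not} occupy every column --- via the classical count of mutually non-adjacent arcs on a cycle --- reproduces the stated sum (with $0\le\ell\le\lfloor\frac{n-2}{4}\rfloor$ forced by non-vanishing of the binomial), while the all-occupied configurations, being proper $2$-colourings of the pertinent rail cycle, supply the $2x^n$ corrections that separate $(i)$ from $(ii)$.

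Granting $V_n(x)-x^n=I_n(x)$, parts $(i)$ and $(ii)$ follow at once: for $n$ odd, $(-x)^n=-x^n$, so $I(C_{2n}(2,n),x)=V_n(x)-x^n=I_n(x)$ and $I(C_{2n}(1,n),x)=V_n(x)+x^n=I_n(x)+2x^n$; for $n$ even, $(-x)^n=x^n$, so $I(C_{2n}(1,n),x)=V_n(x)-x^n=I_n(x)$. Conceptually the parity split is the eigenvalue $-x$ seen through the twist $P$: for odd $n$ the M\"obius ladder gains two extra maximum independent sets of size $n$ (for instance the two colour classes of $C_6(1,3)=K_{3,3}$), which is precisely the $+2x^n$. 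I expect the only real obstacle to be the bookkeeping in that lemma; as a final check I would verify the formula against small extremal cases, e.g.\ $C_6(2,3)=K_3\times K_2$ with $I=1+6x+6x^2=I_3(x)$, and $C_6(1,3)=K_{3,3}$ with $I=1+6x+6x^2+2x^3=I_3(x)+2x^3$.
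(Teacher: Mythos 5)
The paper does not prove this theorem at all: it is quoted from Hoshino's thesis \cite[Theorem 2.26]{RH} (see also \cite[Theorem 3.5]{BH}), so any proof you give is by definition a different route. Your transfer-matrix argument is sound and, in my view, the right self-contained way to do it: the identifications of $C_{2n}(1,n)$ as the M\"obius ladder and of $C_{2n}(2,n)$ ($n$ odd) as the prism $C_n\times K_2$ are correct; the matrix $T$ encodes exactly the column constraints; $T$ commutes with the swap $P$, so $T$ preserves the symmetric plane (eigenvalues the roots of $z^2-(1+x)z-x$, since $\operatorname{tr}T=1$, $\det T=x^2$) and the antisymmetric line (eigenvalue $-x$), giving $\operatorname{tr}(T^n)=V_n(x)+(-x)^n$ and $\operatorname{tr}(PT^n)=V_n(x)-(-x)^n$ as you say. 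Reducing both parts of the theorem to the single identity $V_n(x)-x^n=I_n(x)$, with the parity of $(-x)^n$ producing the $+2x^n$ in part $(ii)$, is clean and also \emph{explains} the $2x^n$ (the two colour classes of the odd M\"obius ladder), which a bare citation does not. What your approach buys is a proof; what the citation buys is brevity.

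Two caveats. First, your computation exposes a typo in the statement as printed: the exponent must be $(1+x)^{n-4\ell-2}$, not $(1+x)^{n-4\ell+2}$ (already for $n=2$ the printed formula gives $1+4x(1+x)^4\neq I(K_4,x)=1+4x$, and the printed degree exceeds the independence number). You silently use the corrected exponent; this should be said explicitly. Second, the one unproven step, $V_n(x)-x^n=I_n(x)$, is genuinely nontrivial: the $k=0$ term $(1+x)^n$ of the Waring expansion must be resummed against the other terms to produce the constant $1$, so the identity is not term-by-term. It is true (I checked $n\le 6$, and it is equivalent to Hoshino's theorem given the rest of your argument), and the safe way to close it is induction on $n$ via the recurrence $W_n=(1+x)W_{n-1}+xW_{n-2}$ applied to $W_n=I_n(x)+x^n$, which reduces to a binomial-coefficient identity. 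Your alternative combinatorial route is vaguer as written: a maximal run of occupied columns does contribute a factor $2$ and a power of $x$, but the bookkeeping that turns ``runs and gaps on a cycle'' into the specific factor $(1+x)^{n-4\ell-2}$ with $\ell$ indexing as stated is not transparent from your description and would need to be worked out in full before I would accept it.
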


We now come to the main result of this section.

\begin{theorem}\label{maintheorem2}
Let $1 \leq a < n$  and $t={\rm gcd}(2n,a)$. 
\begin{enumerate}
\item[$(a)$] If $\frac{2n}{t}$ is even, then: 
\[
{\rm reg}(I(C_{2n}(a,n))) = \begin{cases}
kt+1 & \mbox{if $\frac{n}{t}=2k$, or $\frac{n}{t}=2k+1$ with $k$ an odd number} \\
(k+1)t+1 & \mbox{if $\frac{n}{t}=2k+1$ with $k$ an even number.} 
\end{cases} 
\]
\item[$(b)$] If $\frac{2n}{t}$ is odd, then: 
\[
{\rm reg}(I(C_{2n}(a,n))) = \begin{cases}
\frac{kt}{2}+1 & \mbox{if $\frac{2n}{t}=2k+1$ with $k$ an even number} \\
\frac{(k+1)t}{2}+1 & \mbox{if $\frac{2n}{t}=2k+1$ with $k$ an odd number.} 
\end{cases}
\]
\end{enumerate}
\end{theorem}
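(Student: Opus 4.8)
The plan is to reduce everything to the two base families $C_{2n}(1,n)$ and $C_{2n}(2,n)$, and then to pin down their regularity with Theorem \ref{new-reg-result}. By Theorem \ref{isomorphic-a-n}, $C_{2n}(a,n)$ is a disjoint union of $c$ copies of a single smaller cubic circulant $C_{2m}$: either $c = t$ copies of $C_{2m}(1,m)$ with $m = n/t$ (when $\tfrac{2n}{t}$ is even), or $c = t/2$ copies of $C_{2m}(2,m)$ with $m = \tfrac{2n}{t}$ odd (when $\tfrac{2n}{t}$ is odd; note $t$ is then even since $2\mid 2n$). Because regularity is additive over disjoint unions (Theorem \ref{reg-results}$(i)$), ${\rm reg}(I(C_{2n}(a,n))) = c\,\big({\rm reg}(I(C_{2m}))-1\big)+1$. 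Substituting the base-case values established below and rewriting $m$ in terms of $k$ (e.g. $m = 2k$, or $m = 2k+1$ with $k$ of prescribed parity) reproduces the four displayed formulas. A few components with $m \le 3$ lie outside the range $n \ge 4$ of Lemmas \ref{reg-bounds} and \ref{pdim-bounds} and are verified directly.

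It thus suffices to prove, for $G = C_{2n}(1,n)$ or $G = C_{2n}(2,n)$ ($n$ odd in the latter), on $N := 2n$ variables, that ${\rm reg}(I(G))$ equals the asserted value $r$. The scheme is: Lemma \ref{reg-bounds} gives ${\rm reg}(I(G)) \le r$, Lemma \ref{pdim-bounds} gives an upper bound on ${\rm pd}(I(G))$, and Theorem \ref{new-reg-result} turns these into an equality once we know the sign of $\widetilde{\chi}({\rm Ind}(G))$. One checks in each case that the ${\rm pd}$ bound is either $\le N-r$ (precisely when $r$ is the smaller of the two possible values) or $\le N-r+1$ (when $r$ is the larger value) --- exactly the hypotheses of parts $(ii)$ and $(i)$ of Theorem \ref{new-reg-result}, and in every instance the numerology matches with no slack.

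The remaining input is the sign of the reduced Euler characteristic, obtained from Theorem \ref{independencepoly} and the identity $\widetilde{\chi}({\rm Ind}(G)) = -I(G,-1)$ of \eqref{eq-Euler-independence}. The key point is that every summand of $I_n(x)$ contains the factor $(1+x)^{n-4\ell+2}$, and the summation range $\ell \le \lfloor \tfrac{n-2}{4}\rfloor$ forces $4\ell \le n-2$, hence $n-4\ell+2 \ge 4 > 0$; so every summand vanishes at $x = -1$ and $I_n(-1) = 1$. Consequently $\widetilde{\chi}({\rm Ind}(G)) = -1$ for $C_{2n}(1,n)$ with $n$ even and for $C_{2n}(2,n)$ with $n$ odd, while $\widetilde{\chi}({\rm Ind}(C_{2n}(1,n))) = -\big(1+2(-1)^n\big) = 1$ for $n$ odd. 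In particular $\widetilde{\chi} \ne 0$ always, which is all part $(ii)$ requires; and in the two cases where $r$ is the larger value and part $(i)$ is needed, the parity lines up --- $r$ is even exactly where $\widetilde{\chi} = 1 > 0$ and odd exactly where $\widetilde{\chi} = -1 < 0$ --- so part $(i)(a)$, respectively $(i)(b)$, applies. This yields ${\rm reg}(I(G)) = r$ in all cases, and the reduction of the first paragraph finishes the proof.

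The main obstacle is not conceptual but arithmetical: one must confirm that in each base case the projective-dimension estimate of Lemma \ref{pdim-bounds} meets the threshold $N-r$ or $N-r+1$ of Theorem \ref{new-reg-result} exactly, and that the parity of $r$ is compatible with the globally fixed sign of $\widetilde{\chi}$. An off-by-one error in either Lemma \ref{reg-bounds} or Lemma \ref{pdim-bounds} would break the argument, so their sharpness is essential; the only genuinely new computation on top of those lemmas is the evaluation $I_n(-1) = 1$, which is immediate once one spots the $(1+x)^{n-4\ell+2}$ factor.
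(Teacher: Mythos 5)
Your proposal is correct and follows essentially the same route as the paper: reduce to the two base families $C_{2n}(1,n)$ and $C_{2n}(2,n)$ via Theorem \ref{isomorphic-a-n} and additivity of regularity, then combine the upper bounds of Lemmas \ref{reg-bounds} and \ref{pdim-bounds} with the sign of $\widetilde{\chi}({\rm Ind}(G))$ (computed from Theorem \ref{independencepoly} via $I_n(-1)=1$) to apply Theorem \ref{new-reg-result}. Your case-by-case numerology (when ${\rm pd}\le N-r$ versus $N-r+1$, and the parity of $r$ against the sign of $\widetilde{\chi}$) matches the paper's five cases exactly.
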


\begin{proof} The formulas can verified directly for the special
cases that $n=2$ (i.e., $G = C_4(1,2)$) or $n=3$ (i.e., $G = C_6(1,3)$
and $C_6(2,3)$).  We can therefore assume $n \geq 4$. 
In light of Theorem \ref{isomorphic-a-n} and 
Lemma \ref{reg-results} $(i)$ it will 
suffice to prove that the inequalities of Lemma \ref{reg-bounds} are actually
equalities.  We will make use Theorem \ref{new-reg-result}. 
We consider five cases, where the proof of each case is similar.

\noindent
{\it Case 1.} $G = C_{2n}(1,n)$ with $n=2k$.
\noindent

In this case, Lemma \ref{pdim-bounds} gives ${\rm pd}(I(G)) \leq 3k-1$,
Lemma \ref{reg-bounds} gives ${\rm reg}(I(G)) \leq k+1$. 
Furthermore, since $\widetilde{\chi}({\rm Ind}(G)) = -I(G,-1)$
by equation \eqref{eq-Euler-independence},
Theorem \ref{independencepoly} gives $\widetilde{\chi}({\rm Ind}(G)) = -1$.
Because $G$ has $4k = (k+1)+(3k-1)$ vertices, Theorem
\ref{new-reg-result} $(ii)$ implies ${\rm reg}(I(G)) = k+1$.

\noindent
{\it Case 2.} $G = C_{2n}(1,n)$ with $n=2k+1$ and $k$ even.

We have
${\rm reg}(I(G)) \leq k+2$ and ${\rm pd}(I(G)) \leq 3k+1
=(4k+2)-(k+2)+1 = n - (k+2)+1$ 
by Lemmas \ref{pdim-bounds} and \ref{reg-bounds}, respectively.
Because $n$ is odd, $\widetilde{\chi}({\rm Ind}(G)) = -[I_n(-1) + 2(-1)^n]
=-[1-2]=1 >0$.  So, ${\rm reg}(I(G)) = k+2$ by Theorem
\ref{new-reg-result} $(i)$ $(a)$ because $k+2$ is even and 
$\widetilde{\chi}({\rm Ind}(G)) > 0$.

\noindent
{\it Case 3.} $G = C_{2n}(1,n)$ with $n=2k+1$ and $k$ odd.

We have ${\rm reg}(I(G)) = k+1$ 
by Theorem \ref{new-reg-result} $(ii)$ because
${\rm reg}(I(G)) \leq k+1$ (Lemma \ref{reg-bounds}), ${\rm pd}(I(G)) \leq
3k+1$ (Lemma \ref{pdim-bounds}), $2n=4k+2$ is the number of variables, and
$\widetilde{\chi}({\rm Ind}(G)) \neq 0$.  

\noindent
{\it Case 4.} $G = C_{2n}(2,n)$ with $n=2k+1$ and $k$ even.

We have ${\rm reg}(I(G)) = k+1$ from Theorem
\ref{new-reg-result} $(ii)$ since ${\rm reg}(I(G)) \leq k+1$
(Lemma \ref{reg-bounds})
${\rm pd}(I(G)) \leq 3k+1$ (Lemma \ref{pdim-bounds}), 
and $\widetilde{\chi}({\rm Ind}(G)) = -I(G,-1) = -1 \neq 0$ 
(Theorem \ref{independencepoly}). 

\noindent
{\it Case 5.} $G = C_{2n}(2,n)$ with $n=2k+1$ and $k$ odd.

In our final case, ${\rm reg}(I(G)) \leq k+2$ by Lemma \ref{reg-bounds},
${\rm pd}(I(G)) \leq 3k+1$ by Lemma \ref{pdim-bounds}.  Since
$n$ is odd, $\widetilde{\chi}({\rm Ind}(G)) = -I(G,-1) = -1$ by
Theorem \ref{independencepoly}.  Since $k$ is odd, $k+2$ is
odd.  Because $2n=4k+2$ is the number of variables, we have
${\rm reg}(I(G)) = k+2$ by Theorem \ref{new-reg-result} $(i)$ $(b)$. 

These five cases now complete the proof.
\end{proof}


\bibliographystyle{plain}

\end{document}